\documentclass[a4paper,12pt,reqno]{amsart}
\pdfoutput=1
\usepackage{graphics}
\usepackage{graphicx}
\usepackage{color}
\usepackage[colorlinks=true]{hyperref}
\usepackage[a4paper,margin=1.9cm,top=2.5cm,bottom=2.5cm,centering,vcentering]{geometry}

\newcommand{\sss}{\scriptscriptstyle}
\allowdisplaybreaks[4]

\newtheorem{theorem}{Theorem}
\newtheorem{corollary}[theorem]{Corollary}
\newtheorem{proposition}[theorem]{Proposition}

\newcommand{\x}{\bar{x}}
\newcommand{\half}{{1/2}}
\newcommand{\oo}{\mathit{so}^{\mathrm{odd}}}
\renewcommand{\oe}{o^{\mathrm{even}}}
\renewcommand{\sp}{\mathit{sp}}

\author[A.~Ayyer]{Arvind Ayyer}
\address{A.~Ayyer, Department of Mathematics, Indian Institute of Science, Bangalore 560 012, India}
\email{arvind@iisc.ac.in}
\author[R.~E.~Behrend]{Roger E.~Behrend}
\address{R.~E.~Behrend, School of Mathematics, Cardiff University, Cardiff, CF24 4AG, UK,
and Fakult\"{a}t f\"{u}r Mathematik, Universit\"{a}t Wien, Oskar-Morgenstern-Platz 1, 1090 Wien, Austria}
\email{behrendr@cardiff.ac.uk}

\title[Factorization theorems for classical group characters]
{Factorization theorems for classical group characters, with
applications to alternating sign matrices\\ 
and plane partitions}
\keywords{Schur polynomials, classical group characters, alternating sign matrices, plane partitions}
\subjclass[2010]{05A15, 05E05, 05E10}
\begin{document}
\begin{abstract}
We show that, for a certain class of partitions
and an even number of variables of which half are reciprocals 
of the other half, Schur polynomials can be factorized into products of odd and even orthogonal characters. 
We also obtain related factorizations involving sums of two Schur polynomials, and certain odd-sized sets of variables.
Our results generalize the factorization identities proved by Ciucu and Krattenthaler 
(Advances in combinatorial mathematics, 39--59, 2009) for partitions of rectangular shape.  
We observe that if, in some of the results, the partitions are taken to have
rectangular or double-staircase shapes and all of the variables are set to~$1$, then factorization identities 
for numbers of certain plane partitions, alternating sign matrices and related combinatorial objects are obtained.
\end{abstract}

\maketitle

\section{Introduction}
Some of the most challenging problems in enumerative combinatorics in recent decades 
have involved plane partitions, alternating sign matrices, and related objects. Schur polynomials, 
i.e., characters of the general linear group, as well as characters of other classical groups, have played a role
in solving many of these problems.  While the appearance of such characters in the enumeration of plane partitions is expected, 
a combinatorial understanding of their occurrence in the enumeration of alternating sign matrices is currently lacking.

The main results of this paper are Theorems~\ref{thm1}--\ref{thm3}, 
which state that, for certain partitions and variables, Schur polynomials can be factorized into products of 
characters of orthogonal groups.  More specifically, in Theorem~\ref{thm1}, we find that,
for any partition $(\lambda_1,\ldots,\lambda_n)$ and integer $k\ge\lambda_1$, or half-partition $(\lambda_1,\ldots,\lambda_n)$ and 
half-integer $k\ge\lambda_1$, the Schur polynomial 
$s_{(k+\lambda_1,\ldots,k+\lambda_n,k-\lambda_n,\ldots,k-\lambda_1)}(x_1,\ldots,x_n,$ $x^{-1}_1,\ldots,x^{-1}_n)$ decomposes, up to a simple
prefactor, into a product of a character of a special odd orthogonal group and a character of an even orthogonal group.  In Theorem~\ref{thm2},
we obtain a similar factorization of the sum of two Schur polynomials, and in Theorem~\ref{thm3}, we obtain 
a factorization of a Schur polynomial with arguments $x_1,\ldots,x_n,x^{-1}_1,\ldots,x^{-1}_n,1$. 
We prove all of these cases by using standard determinant expressions (which arise from the Weyl character formula) 
for the characters of the classical groups, and applying elementary determinant operations. 
Currently, we do not have a representation-theoretic explanation of these factorizations, although it would clearly be interesting to find one. 
On the other hand, combinatorial proofs of such factorizations,
and of generalizations involving skew Schur polynomials, will be provided in a forthcoming paper~\cite{AyyFis18}.

The connection between these results and plane partitions, alternating sign matrices and related combinatorial objects is that factorization identities 
for numbers of such objects can be obtained by setting all of the variables in the theorems to~$1$, and specializing to partitions with
certain rectangular or double-staircase shapes.
Indeed, the work reported in this paper was primarily motivated by previously-known factorization identities 
for Schur polynomials indexed by 
partitions with such shapes.  In particular, identities for the rectangular case
$s_{(\underbrace{\scriptstyle m,\ldots,m}_n,\underbrace{\scriptstyle 0,\ldots,0}_n)}(x_1,\ldots,x_n,x^{-1}_1,\ldots,x^{-1}_n)$
were noted without proof by Okada~\cite[Lem.~5.2, 1st Eq.]{Oka98}, and proved by Ciucu and Krattenthaler~\cite[Thms.~3.1 \& 3.2]{CiuKra09}, 
while identities for the double-staircase cases	
$s_{(n,n-1,n-1,\ldots,1,1,0)}(x_1,\ldots,x_n,x^{-1}_1,\ldots,x^{-1}_n)$, \ 
$s_{(n-1,n-1,\ldots,1,1,0,0)}(x_1,\ldots,x_n,x^{-1}_1,\ldots,\allowbreak x^{-1}_n)$ and
$s_{(n,n-1,n-1,\ldots,1,1,0,0)}(x_1,\ldots,x_n,x^{-1}_1,\ldots,x^{-1}_n,1)$ were noted without proof by
Ayyer, Beh\-rend and Fischer~\cite[Remarks~5.4 \& 6.4]{AyyBehFis16}, and Behrend, Fischer and Konvalinka~\cite[Eq.~(63)]{BehFisKon17}.
The results of this paper generalize all of these cases, and provide new, shorter proofs 
for the rectangular cases.

The structure of the rest of this paper is as follows. In Section~\ref{characters}, 
we introduce the classical group characters, and identify some of their elementary properties.
In Section~\ref{results}, we state the main Schur polynomial factorization theorems, and
in Section~\ref{proofs} we provide their proofs.
In Section~\ref{applications}, we apply the main theorems to 
partitions with rectangular or double-staircase shapes,
and obtain factorization identities for numbers of plane partitions, 
alternating sign matrices and related objects.

\section{Classical group characters}\label{characters}
We first review some standard terminology.
A \emph{half-integer} is an odd integer divided by~2.
A \emph{partition}, or respectively \emph{half-partition}, is a tuple $(\lambda_1,\ldots,\lambda_n)$ whose 
entries are all nonnegative integers,
or respectively all positive half-integers, in weakly decreasing order, $\lambda_1\ge\ldots\ge\lambda_n$. 
In the literature, zero entries of a partition are often omitted, but in this paper 
it will be convenient for these to be shown explicitly, since they often play an important role.  
The \emph{shape} of a partition refers to the shape of its associated Young diagram. For further information regarding 
partitions and Young diagrams, see, for example, Macdonald~\cite[Ch.~I.1]{Mac95} or Stanley~\cite[Ch.~7]{Sta99},~\cite[Ch.~1]{Sta11}.

For an indeterminate $x$, the notation 
\begin{equation*}\x=x^{-1}\end{equation*}
will be used throughout the rest of this paper.  

We now introduce the classical group characters. 
For indeterminates $x_1,\ldots,x_n$, and a partition $\lambda=(\lambda_1,\ldots,\lambda_n)$, the \emph{Schur polynomial} 
or \emph{general linear character} is given by
\begin{equation}\label{Schurdef}s_\lambda(x_1,\ldots,x_n)=\frac{\displaystyle\det_{1\le i,j\le n}\Bigl(x_i^{\lambda_j+n-j}\Bigr)}
{\prod_{1\le i<j\le n}(x_i-x_j)},\end{equation}
and the \emph{symplectic character} is given by
\begin{equation}\label{spdef}\sp_\lambda(x_1,\ldots,x_n)=
\frac{\displaystyle\det_{1\le i,j\le n}\Bigl(x_i^{\lambda_j+n-j+1}-\x_i^{\lambda_j+n-j+1}\Bigr)}
{\prod_{i=1}^n(x_i-\x_i)\,\prod_{1\le i<j\le n}(x_i+\x_i-x_j-\x_j)}.\end{equation}

For indeterminates $x_1,\ldots,x_n$, and a partition or half-partition $\lambda=(\lambda_1,\ldots,\lambda_n)$, the \emph{odd orthogonal character} is given by
\begin{equation}\label{oodef}\oo_\lambda(x_1,\ldots,x_n)=
\frac{\displaystyle\det_{1\le i,j\le n}\Bigl(x_i^{\lambda_j+n-j+\half}-\x_i^{\lambda_j+n-j+\half}\Bigr)}
{\prod_{i=1}^n\bigl(x_i^\half-\x_i^\half\bigr)\,\prod_{1\le i<j\le n}(x_i+\x_i-x_j-\x_j)},\end{equation}
and the \emph{even orthogonal character} is given by
\begin{equation}\label{oedef}\oe_\lambda(x_1,\ldots,x_n)=
\frac{\displaystyle\det_{1\le i,j\le n}\Bigl(x_i^{\lambda_j+n-j}+\x_i^{\lambda_j+n-j}\Bigr)}
{(1+\delta_{\lambda_n,0})\prod_{1\le i<j\le n}(x_i+\x_i-x_j-\x_j)},\end{equation}
where $\delta$ is the Kronecker delta.

For the exact connection between the functions~\eqref{Schurdef}--\eqref{oedef}
and characters of irreducible representations of the general linear group $\mathrm{GL}_n(\mathbb{C})$, symplectic group $\mathrm{Sp}_{2n}(\mathbb{C})$,
special odd orthogonal group $\mathrm{S0}_{2n+1}(\mathbb{C})$ and even orthogonal group
$\mathrm{0}_{2n}(\mathbb{C})$ (and the spin covering groups for the orthogonal case), see, for example, Fulton and Harris~\cite[Ch.~24]{FulHar91} 
and Proctor~\cite[Appendix~2]{Pro94}.

Some elementary properties of these functions, which can be verified using the expressions \eqref{Schurdef}--\eqref{oedef}, are as follows,
where $\lambda$ denotes $(\lambda_1,\ldots,\lambda_n)$.
\begin{list}{$\bullet$}{\setlength{\topsep}{0.8mm}\setlength{\labelwidth}{2mm}\setlength{\leftmargin}{6mm}}
\item For a partition $\lambda$, $s_{\lambda}(x_1,\ldots,x_n)$ is a symmetric polynomial in
$x_1$, \ldots,~$x_n$, and $\sp_\lambda(x_1,\ldots,x_n)$, $\oo_\lambda(x_1,\ldots,x_n)$ and
$\oe_\lambda(x_1,\ldots,x_n)$ are symmetric Laurent polynomials in $x_1$, \ldots, $x_n$.
\item For a half-partition $\lambda$, $\oo_\lambda(x_1^2,\ldots,x_n^2)$ and
$\oe_\lambda(x_1^2,\ldots,x_n^2)$ are symmetric Laurent polynomials in $x_1$, \ldots, $x_n$.
\item For a partition $\lambda$ and integer $k\ge-\lambda_n$,
\begin{equation}\label{Schurtrans}(x_1\ldots x_n)^k\,s_\lambda(x_1,\ldots,x_n)=
s_{(k+\lambda_1,\ldots,k+\lambda_n)}(x_1,\ldots,x_n).\end{equation}
\item For a partition $\lambda$ and integer $k\ge\lambda_1$,
\begin{equation}\label{Schurrecip}
(x_1\ldots x_n)^k\,s_\lambda(\x_1,\ldots,\x_n)=s_{(k-\lambda_n,\ldots,k-\lambda_1)}(x_1,\ldots,x_n).\end{equation}
\item For a partition $\lambda$ and $1\le i\le n$,
$\sp_\lambda(x_1,\ldots,x_n)$ is invariant under replacement of $x_i$ with $\x_i$.
\item For a partition or half-partition $\lambda$ and $1\le i\le n$,
$\oo_\lambda(x_1,\ldots,x_n)$
and $\oe_\lambda(x_1,\ldots,x_n)$ are invariant under replacement of $x_i$ with $\x_i$.
\item For a partition $\lambda$, the odd and even orthogonal
characters indexed by the half-partition $(\lambda_1+\half,\ldots,\lambda_n+\half)$ can
be expressed in terms of characters indexed by $\lambda$ as
\begin{align}\label{ooshifted}\quad\oo_{(\lambda_1+\half,\ldots,\lambda_n+\half)}(x_1,\ldots,x_n)&=
\prod_{i=1}^n\bigl(x_i^\half+\x_i^\half\bigr)\:\sp_\lambda(x_1,\ldots,x_n)
\intertext{and}
\label{oeshifted}\oe_{(\lambda_1+\half,\ldots,\lambda_n+\half)}(x_1,\ldots,x_n)&=
(-1)^{\sum_{i=1}^n\lambda_i}\,\prod_{i=1}^n\bigl(x_i^\half+\x_i^\half\bigr)\:\oo_\lambda(-x_1,\ldots,-x_n).\end{align}
Note that ambiguities arise in $(-x_i)^k$ if $k$ is a half-integer,
and hence the consideration of such terms will be avoided in this paper. This can be done for the odd orthogonal character 
in~\eqref{oeshifted} by observing that (since $\lambda$ is a partition)
$\oo_\lambda(x_1,\ldots,x_n)$ is a Laurent polynomial in $x_1,\ldots,x_n$, which can then be evaluated at $-x_1,\ldots,-x_n$.
\end{list}

\section{Main results}\label{results}
In this section, we state the main results of the paper. In Subsection~\ref{mainstate}, we
provide the primary statements of these results, while, in 
Subsection~\ref{altstate}, we provide alternative statements of the results.
Proofs will be given in Section~\ref{proofs}.

\subsection{Primary statements of the results}\label{mainstate}
The main results of this paper are as follows.  (Some general remarks regarding these results will be given
at the end of this subsection.)
\begin{theorem}\label{thm1} For any partition $(\lambda_1,\ldots,\lambda_n)$ and integer $k\ge\lambda_1$,
or half-partition $(\lambda_1,\ldots,\lambda_n)$ and half-integer $k\ge\lambda_1$,
\begin{multline}\label{fact1}\prod_{i=1}^n\bigl(x_i^\half+\x_i^\half\bigr)\,
s_{(k+\lambda_1,\ldots,k+\lambda_n,k-\lambda_n,\ldots,k-\lambda_1)}(x_1,\ldots,x_n,\x_1,\ldots,\x_n)\\*[-2mm]
=\oo_{(\lambda_1,\ldots,\lambda_n)}(x_1,\ldots,x_n)\;\oe_{(\lambda_1+\half,\ldots,\lambda_n+\half)}(x_1,\ldots,x_n).\end{multline}
\end{theorem}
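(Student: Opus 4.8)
The plan is to prove the identity by manipulating the determinant expressions coming from the Weyl character formula. I would begin with the left-hand side, writing the Schur polynomial $s_{(k+\lambda_1,\ldots,k+\lambda_n,k-\lambda_n,\ldots,k-\lambda_1)}(x_1,\ldots,x_n,\x_1,\ldots,\x_n)$ via the defining formula~\eqref{Schurdef}. Since the Schur polynomial is indexed by a partition of length $2n$ and evaluated at the $2n$ variables $x_1,\ldots,x_n,\x_1,\ldots,\x_n$, its numerator is a $2n\times 2n$ determinant of monomials, and its denominator is the Vandermonde-type product over all pairs among those $2n$ variables. The key structural observation is that the exponents appearing in the partition are symmetric about $k$, namely the parts are $k\pm\lambda_i$, which should let me pair up the columns of the $2n\times 2n$ determinant so as to split the calculation into two halves.

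\smallskip

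The main technical step is to perform elementary row and column operations on this $2n\times 2n$ determinant to block-factorize it. Concretely, I would group the $2n$ rows into the $x_i$-rows and the $\x_i$-rows, and take appropriate sums and differences of the row for $x_i$ and the row for $\x_i$, so that entries become of the form $x_i^a\pm\x_i^a$. The aim is to reach, after suitable column reorganization exploiting the $k\pm\lambda_i$ symmetry of the exponents, a block-triangular or block-antidiagonal $2n\times 2n$ matrix whose determinant factors as a product of two $n\times n$ determinants: one built from entries $x_i^{\lambda_j+n-j+\half}-\x_i^{\lambda_j+n-j+\half}$ (matching the numerator of $\oo_{(\lambda_1,\ldots,\lambda_n)}$ in~\eqref{oodef}) and one built from entries $x_i^{\lambda_j+n-j+\half}+\x_i^{\lambda_j+n-j+\half}$ (matching, via~\eqref{oeshifted} and the shifted-character formula, the numerator of $\oe_{(\lambda_1+\half,\ldots,\lambda_n+\half)}$). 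The half-integer shifts by $\tfrac12$ are exactly what the prefactor $\prod_i(x_i^\half+\x_i^\half)$ on the left is designed to absorb; I expect that after pulling out a common power of $(x_1\ldots x_n)$ (using~\eqref{Schurtrans}--\eqref{Schurrecip} to relate the centered exponents) the symmetric/antisymmetric combinations naturally produce these $\pm$ entries.

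\smallskip

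The remaining task is bookkeeping on the denominators. I would factor the full $2n$-variable Vandermonde denominator of~\eqref{Schurdef} into three pieces: the product over pairs $x_i,x_j$, the product over pairs $\x_i,\x_j$, and the mixed product over pairs $x_i,\x_j$ (including the $i=j$ factors $x_i-\x_i$). The mixed and within-group products should reorganize, after extracting suitable monomial prefactors, into exactly the denominators $\prod_i(x_i^\half-\x_i^\half)\prod_{i<j}(x_i+\x_i-x_j-\x_j)$ of~\eqref{oodef} and $\prod_{i<j}(x_i+\x_i-x_j-\x_j)$ of~\eqref{oedef}, up to the factor $\prod_i(x_i^\half+\x_i^\half)$ that then combines with the prefactor on the left. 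I would finally invoke~\eqref{oeshifted} to convert the even orthogonal character indexed by $(\lambda_1+\half,\ldots,\lambda_n+\half)$ into the form that matches what the determinant manipulation produces, checking the sign $(-1)^{\sum_i\lambda_i}$ carefully.

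\smallskip

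The hard part will be the combinatorial organization of the column operations: getting the exponents $\{k+\lambda_1,\ldots,k+\lambda_n,k-\lambda_n,\ldots,k-\lambda_1\}$ to pair correctly so that the $2n\times 2n$ determinant genuinely decouples into the odd-orthogonal and even-orthogonal blocks, while simultaneously tracking the signs introduced by row/column permutations and the monomial prefactors pulled out of each row. A secondary subtlety is handling the two cases uniformly — $\lambda$ a partition with integer $k$, versus $\lambda$ a half-partition with half-integer $k$ — since in both cases the exponents $\lambda_j+n-j+\half$ in~\eqref{oodef} and the shift in~\eqref{oeshifted} must come out as the intended (half-)integers; I expect the algebra to be identical once everything is written in terms of $x_i^\half$ and $\x_i^\half$, but the edge factor $(1+\delta_{\lambda_n,0})$ in~\eqref{oedef} and the half-integer parity will need separate verification.
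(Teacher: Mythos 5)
Your plan is essentially the paper's own proof: express the Schur polynomial as a $2n\times2n$ determinant via~\eqref{Schurdef}, rescale the $x_i$- and $\x_i$-rows (by $\x_i^{k+n-\half}$ and $x_i^{k+n-\half}$) and reverse the right column block so the matrix has diagonal blocks $A$ and off-diagonal blocks $B$ with $A_{ij}=x_i^{\lambda_j+n-j+\half}$, $B_{ij}=\x_i^{\lambda_j+n-j+\half}$, apply $\det=\det(A-B)\det(A+B)$, and match the numerators and denominators of~\eqref{oodef} and~\eqref{oedef}. The only simplification over your sketch is that $\det(A+B)$ is already, verbatim, the numerator of $\oe_{(\lambda_1+\half,\ldots,\lambda_n+\half)}$ from~\eqref{oedef} (with $\delta_{\lambda_n+\half,0}=0$ automatically, since $\lambda_n+\half>0$), so neither~\eqref{oeshifted} nor its sign $(-1)^{\sum_i\lambda_i}$ is needed.
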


For the case in which $(\lambda_1,\ldots,\lambda_n)$ is a partition 
and $k$ is an integer, the shape of the partition on the LHS of~\eqref{fact1} is illustrated in Figure~\ref{thm1fig}.

\begin{figure}[htbp!]
\begin{center}
\includegraphics[scale=0.5]{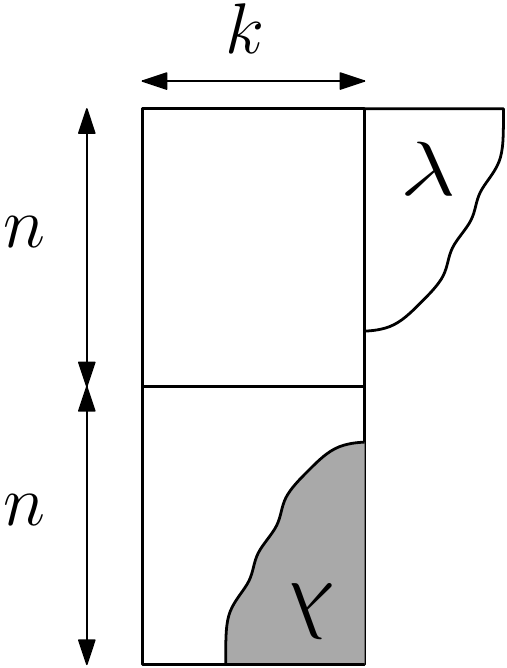}
\caption{The shape of the partition on the LHS of~\eqref{fact1},
where $\lambda=(\lambda_1,\ldots,\lambda_n)$ is a partition and $k\ge\lambda_1$ is an integer.
The inverted $\lambda$ denotes $(\lambda_n,\ldots,\lambda_1)$, and 
the shading indicates that the corresponding region should be removed.}
\label{thm1fig}
\end{center}
\end{figure}

\begin{theorem}\label{thm2} For any partition $(\lambda_0,\ldots,\lambda_n)$ and integers $k_1,k_2\ge\lambda_0$,
or half-partition $(\lambda_0,\ldots,\allowbreak\lambda_n)$ and half-integers $k_1,k_2\ge\lambda_0$,
\begin{multline}\label{fact2}\prod_{i=1}^n\bigl(x_i^\half+\x_i^\half\bigr)\,
\bigl(s_{(k_1+\lambda_1,\ldots,k_1+\lambda_n,k_1-\lambda_{n-1},\ldots,k_1-\lambda_0)}(x_1,\ldots,x_n,\x_1,\ldots,\x_n)\\*[-3mm]
\qquad\qquad\qquad{}+
s_{(k_2+\lambda_1,\ldots,k_2+\lambda_{n-1},k_2-\lambda_n,\ldots,k_2-\lambda_0)}(x_1,\ldots,x_n,\x_1,\ldots,\x_n)\bigr)\\*[1mm]
=(1+\delta_{\lambda_n,0})\,
\oo_{(\lambda_0+\half,\ldots,\lambda_{n-1}+\half)}(x_1,\ldots,x_n)\;\oe_{(\lambda_1,\ldots,\lambda_n)}(x_1,\ldots,x_n).\end{multline}
\end{theorem}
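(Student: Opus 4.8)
The plan is to strip off the prefactor and the half-integer shift, reducing \eqref{fact2} to a cleaner factorization into a symplectic and an even orthogonal character, and then to establish that reduced identity by direct manipulation of the single $2n\times2n$ determinant coming from \eqref{Schurdef}. Applying \eqref{ooshifted} with $(\lambda_0,\ldots,\lambda_{n-1})$ in place of $(\lambda_1,\ldots,\lambda_n)$ shows that the factor $\prod_{i=1}^n(x_i^\half+\x_i^\half)$ on the left of \eqref{fact2} turns $\sp_{(\lambda_0,\ldots,\lambda_{n-1})}$ into $\oo_{(\lambda_0+\half,\ldots,\lambda_{n-1}+\half)}$, so it is enough to prove
\begin{multline*}
s_{(k_1+\lambda_1,\ldots,k_1+\lambda_n,k_1-\lambda_{n-1},\ldots,k_1-\lambda_0)}(x_1,\ldots,x_n,\x_1,\ldots,\x_n)\\
{}+s_{(k_2+\lambda_1,\ldots,k_2+\lambda_{n-1},k_2-\lambda_n,\ldots,k_2-\lambda_0)}(x_1,\ldots,x_n,\x_1,\ldots,\x_n)\\
=(1+\delta_{\lambda_n,0})\,\sp_{(\lambda_0,\ldots,\lambda_{n-1})}(x_1,\ldots,x_n)\;\oe_{(\lambda_1,\ldots,\lambda_n)}(x_1,\ldots,x_n).
\end{multline*}
Since $(x_i\x_i)^{k+n}=1$, this reduction is uniform across the integer and half-integer cases.

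Next I would write each Schur polynomial on the left as its $2n\times2n$ determinant over the $2n$-variable Vandermonde product, and set $e_j=\lambda_j+n-j$ for $0\le j\le n$. Pulling the power $x_i^{k_1+n}$ out of each $x_i$-row and $\x_i^{k_1+n}$ out of each $\x_i$-row, whose net contribution is $\prod_i(x_i\x_i)^{k_1+n}=1$, rewrites the first determinant so that its columns carry the exponents $e_1,\ldots,e_n$ on the ``direct'' columns and $e_0,\ldots,e_{n-1}$ on the ``reciprocal'' columns; the second determinant, treated with $k_2$, carries $e_1,\ldots,e_{n-1}$ on the direct columns and $e_0,\ldots,e_n$ on the reciprocal columns. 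I would then replace each reciprocal pair of rows by their sum and difference, producing entries $x_i^{e}+\x_i^{e}$ in the first $n$ rows and $x_i^{e}-\x_i^{e}$ in the last $n$ rows, and, for each shared exponent $e_1,\ldots,e_{n-1}$, add and subtract its direct and reciprocal columns. After these operations both matrices become block diagonal with respect to the splitting (sum rows)/(difference rows) versus (even columns)/(odd columns), except that the two columns carrying $e_0$ (reciprocal in both determinants) and $e_n$ (direct for the first, reciprocal for the second) straddle both row blocks.

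I would then Laplace expand each determinant along its first $n$ rows. A complementary pair of minors is nonzero only if the top block uses all $n-1$ shared even columns together with exactly one of the two straddling columns, so precisely two families of terms survive. Taking $e_n$ in the top block and $e_0$ in the bottom yields the even orthogonal numerator $\det(x_i^{e_j}+\x_i^{e_j})_{1\le i,j\le n}$ with exponents $e_1,\ldots,e_n$ times the symplectic numerator $\det(x_i^{e_{j-1}}-\x_i^{e_{j-1}})_{1\le i,j\le n}$ with exponents $e_0,\ldots,e_{n-1}$; the other choice produces a mismatched product. The point of taking the sum of the two Schur polynomials is that the straddling column $e_n$ has identical sum-row entries but opposite difference-row entries in the two determinants (being direct in one and reciprocal in the other), whereas the $e_0$ column and all shared columns agree; hence the mismatched family cancels between the two determinants and the matched family doubles. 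This produces exactly $\sp_{(\lambda_0,\ldots,\lambda_{n-1})}\,\oe_{(\lambda_1,\ldots,\lambda_n)}$ up to the denominators and constants.

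What remains is to match the normalizations. The $2n$-variable Vandermonde product splits, exactly as in the proof of Theorem~\ref{thm1}, into the two denominators appearing in \eqref{spdef} and \eqref{oedef} together with factors that cancel the powers of $2$ generated by the row and column operations; the normalization $(1+\delta_{\lambda_n,0})$ in \eqref{oedef} is precisely what reinstates the factor stated on the right of \eqref{fact2}, and no special treatment of the degenerate value $e_n=0$ (that is, $\lambda_n=0$) is needed. The conceptual heart of the argument is the cancellation-and-doubling observation, which is clean; the main technical burden, and the step most prone to error, will be the bookkeeping: verifying that the Laplace expansion has exactly the two stated families, tracking the signs arising from reordering columns into decreasing exponent order and from the minus signs attached to reciprocal columns, and accounting for all powers of $2$ so that the surviving constant is exactly $(1+\delta_{\lambda_n,0})$.
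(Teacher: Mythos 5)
Your proposal is correct in outline and arrives at exactly the two determinant factors that the paper does, but the execution of the key step is genuinely different. After the row normalization that removes $k_1$ and $k_2$ (which you and the paper perform identically), the paper observes that the two resulting $2n\times 2n$ matrices differ in a \emph{single} column --- column $n$, which reads $x_i^{\lambda_n}$ over $\x_i^{\lambda_n}$ in one matrix and $\x_i^{\lambda_n}$ over $x_i^{\lambda_n}$ in the other --- so by column-multilinearity the sum of the two determinants collapses to one determinant in which that column is $x_i^{\lambda_n}+\x_i^{\lambda_n}$ in both row blocks; a three-line block identity (add the last block column to the first, subtract the top row block from the bottom, read off the block-triangular determinant, see \eqref{detid2}) then gives $\det(A+B\,|\,c)\,\det(e-d\,|\,A-B)$ with no Laplace expansion and no powers of $2$ to track. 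You instead keep the two determinants separate, symmetrize rows and columns, and extract the factorization from a Laplace expansion along the sum rows plus a cancellation argument. I checked the logic: only your two families survive (a top minor using an odd column, or a bottom minor using an even column, vanishes), the Laplace signs for a given family are common to both determinants since the straddling columns occupy the same positions, and the $e_n$ column indeed flips sign only in its difference-row entries, so the mismatched family cancels and the matched family doubles. This is a valid and somewhat more illuminating route --- it explains \emph{why} the sum of two Schur polynomials is needed --- but it carries exactly the sign and power-of-$2$ bookkeeping you flag, all of which the column-merge trick avoids. One small repair: in the half-partition case $(\lambda_0,\ldots,\lambda_{n-1})$ is a half-partition, and the paper defines neither a symplectic character indexed by a half-partition nor \eqref{ooshifted} in that generality, so your opening reduction should either be justified directly from the determinant formulas or bypassed by reading off $\oo_{(\lambda_0+\half,\ldots,\lambda_{n-1}+\half)}$ from the final numerator $\det_{1\le i,j\le n}\bigl(x_i^{\lambda_{j-1}+n-j+1}-\x_i^{\lambda_{j-1}+n-j+1}\bigr)$, as the paper does.
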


For the case in which $(\lambda_0,\ldots,\lambda_n)$ is a partition
and $k_1$, $k_2$ are integers, the shapes of the partitions on the LHS of~\eqref{fact2} are illustrated in Figure~\ref{thm2fig}.

\begin{figure}[htbp!]
\begin{center}
\includegraphics[scale=0.5]{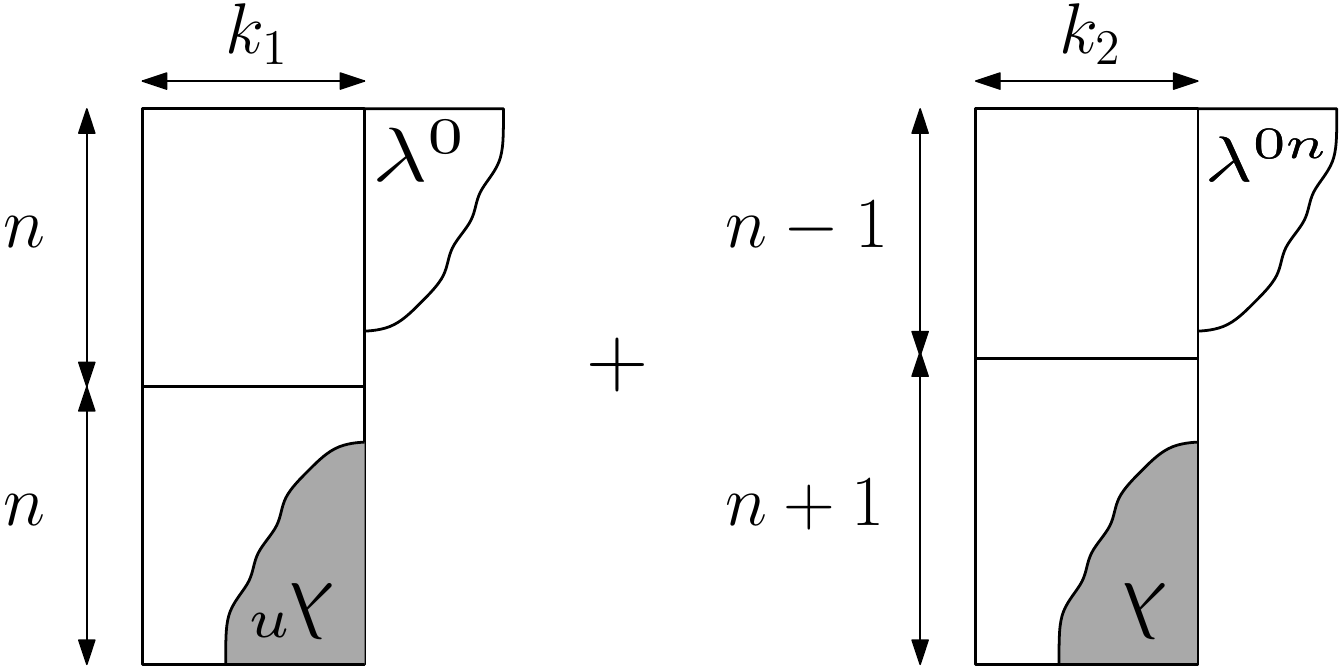}
\caption{The shapes of the partitions on the LHS of~\eqref{fact2}, where 
$\lambda=(\lambda_0,\ldots,\lambda_n)$ is a partition,
$\lambda^0$, $\lambda^n$ and $\lambda^{0n}$ denote $(\lambda_1,\ldots,\lambda_n)$,
$(\lambda_0,\ldots,\lambda_{n-1})$ and $(\lambda_1,\ldots,\lambda_{n-1})$, respectively,
and $k_1,k_2\ge\lambda_0$ are integers.
Also, the inverted $\lambda$ and $\lambda^n$ denote $(\lambda_n,\ldots,\lambda_0)$
and $(\lambda_{n-1},\ldots,\lambda_0)$, respectively, and 
shading indicates that the corresponding region should be removed.}
\label{thm2fig}
\end{center}
\end{figure}

\begin{theorem}\label{thm3} For any partition $(\lambda_0,\ldots,\lambda_n)$ and integer $k\ge\lambda_0$,
or half-partition $(\lambda_0,\ldots,\lambda_n)$ and half-integer $k\ge\lambda_0$,
\begin{align}\notag&2\,\prod_{i=1}^n\bigl(x_i^\half+\x_i^\half\bigr)\,
s_{(k+\lambda_1,\ldots,k+\lambda_n,k-\lambda_n,\ldots,k-\lambda_0)}(x_1,\ldots,x_n,\x_1,\ldots,\x_n,1)\\*[-2mm]
\label{fact3}&\hspace{38mm}=
\oo_{(\lambda_1,\ldots,\lambda_n)}(x_1,\ldots,x_n)\;\oe_{(\lambda_0+\half,\ldots,\lambda_n+\half)}(x_1,\ldots,x_n,1)
\intertext{and}
\notag&2\,\prod_{i=1}^n\bigl(x_i^\half+\x_i^\half\bigr)\,
s_{(k+\lambda_0,\ldots,k+\lambda_{n-1},k\pm\lambda_n,k-\lambda_{n-1},\ldots,k-\lambda_0)}(x_1,\ldots,x_n,\x_1,\ldots,\x_n,1)\\*[-2mm]
\label{fact3'}&\hspace{38mm}=(1+\delta_{\lambda_n,0})\,
\oo_{(\lambda_0+\half,\ldots,\lambda_{n-1}+\half)}(x_1,\ldots,x_n)\;\oe_{(\lambda_0,\ldots,\lambda_n)}(x_1,\ldots,x_n,1).\end{align}
\end{theorem}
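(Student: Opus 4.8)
The plan is to prove both~\eqref{fact3} and~\eqref{fact3'} by the determinantal method already used for Theorems~\ref{thm1} and~\ref{thm2}: express the left-hand Schur polynomial through~\eqref{Schurdef} as a single $(2n+1)\times(2n+1)$ determinant $\det(z_i^{\mu_j+2n+1-j})$ over the Vandermonde product $V=\prod_{1\le i<j\le 2n+1}(z_i-z_j)$, where $z=(x_1,\ldots,x_n,\x_1,\ldots,\x_n,1)$ and $\mu$ is the indexing partition, and then reduce this determinant by elementary row and column operations. The key preliminary observation is that, measured relative to the centre $c=k+n+\half$ for~\eqref{fact3} (respectively $c=k+n$ for~\eqref{fact3'}), the column exponents $\mu_j+2n+1-j$ are exactly the symmetric pairs $\pm h_s$ with $h_s=\lambda_s+n-s+\half$ (respectively $\pm g_s$ with $g_s=\lambda_s+n-s$) demanded by the orthogonal characters, but with one unpaired value left over, this unpaired value being the signature of the extra variable~$1$.

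For~\eqref{fact3} I would first factor $x_i^{c}$ out of the row indexed by $x_i$ and $\x_i^{c}=x_i^{-c}$ out of the row indexed by $\x_i$, which contributes no net factor and leaves row $i$ equal to $(x_i^{r})_r$, row $n+i$ equal to $(x_i^{-r})_r$, and the final row all~$1$'s, with $r$ ranging over $\{h_1,\ldots,h_n\}\cup\{-h_0,\ldots,-h_n\}$. Replacing each pair of rows $(x_i,\x_i)$ by their sum and difference yields rows $S_i$ with entries $x_i^{h_s}+x_i^{-h_s}$ and rows $D_i$ with entries $x_i^{h_s}-x_i^{-h_s}$, and on every $S_i$ and on the all-ones row the columns indexed by $+h_m$ and $-h_m$ now coincide. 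Replacing, for $m=1,\ldots,n$, each such pair of columns by their sum and difference therefore makes the $n$ difference columns vanish identically on all the $S_i$ and on the all-ones row. Reordering the rows as $(S_1,\ldots,S_n,\text{all-ones},D_1,\ldots,D_n)$ and grouping the columns as (the $n$ sum columns, the one unpaired column $\mid$ the $n$ difference columns) exhibits the matrix as block lower-triangular, so its determinant is the product of an $(n+1)\times(n+1)$ block and an $n\times n$ block.

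I would then identify the two blocks. The $n\times n$ block equals $2^n\det(x_i^{h_j}-\x_i^{h_j})$, that is, $2^n$ times the numerator of $\oo_{(\lambda_1,\ldots,\lambda_n)}(x_1,\ldots,x_n)$ in~\eqref{oodef}. In the $(n+1)\times(n+1)$ block the unpaired column, after the centring, has entry $x_i^{h_0}+x_i^{-h_0}$ on each $S_i$, so this block, whose final row is the all-ones row contributed by the variable~$1$ (matching up to a factor~$2$ the row of~$2$'s in~\eqref{oedef}), is, up to powers of~$2$, exactly the numerator of $\oe_{(\lambda_0+\half,\ldots,\lambda_n+\half)}(x_1,\ldots,x_n,1)$. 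It then remains to reconcile the denominators: I would factor $V$, up to sign, as $\prod_i(x_i^\half+\x_i^\half)(x_i^\half-\x_i^\half)^3\prod_{1\le i<j\le n}(x_i+\x_i-x_j-\x_j)^2$ — the extra $(x_i^\half-\x_i^\half)^2$ coming from the cross terms $(x_i-1)(\x_i-1)=-(x_i^\half-\x_i^\half)^2$ involving the variable~$1$ — and verify that this, with the prefactor $2\prod_i(x_i^\half+\x_i^\half)$ and the collected powers of~$2$, rebuilds precisely the product of the denominators of $\oo$ and $\oe$ in~\eqref{oodef} and~\eqref{oedef}.

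The identity~\eqref{fact3'} runs along the same lines with $c=k+n$ and $g_s=\lambda_s+n-s$; here the unpaired value is $\pm g_n$, and since the entries of $S_i$ and of the all-ones row depend only on $|g_n|$, both choices of sign in $k\pm\lambda_n$ produce the same two blocks, hence the same right-hand side. Because the even orthogonal character is now integer-indexed, the factor $1+\delta_{\lambda_n,0}$ from~\eqref{oedef} must be tracked throughout, and it reappears as the matching factor $1+\delta_{\lambda_n,0}$ on the right of~\eqref{fact3'}; the degenerate case $\lambda_n=0$, where the two signs coincide and the unpaired exponent is~$0$, is exactly where this factor is nontrivial. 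The step I expect to demand the most care is the asymmetry itself: in contrast to Theorem~\ref{thm1}, where both halves are genuine symmetric pairs and the reduced matrix factors as a product, the odd number of variables forces a single unpaired column, and it is only the combination of centring, the sum/difference operations, and the special all-ones row that lets this column and that row assemble into the correct $(n+1)$-dimensional even orthogonal determinant while block-triangularity cleanly splits off the odd orthogonal factor. Alongside this, the honest accounting of the reordering signs — which I expect to total $(-1)^{\binom n2}$ and cancel the sign in the factorization of $V$ — and of the powers of~$2$ is the remaining, purely routine, bookkeeping.
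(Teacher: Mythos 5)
Your proposal is correct and follows essentially the same route as the paper: both express the Schur polynomial via the bialternant formula \eqref{Schurdef}, centre the exponents by extracting $x_i^{\pm(k+n+\half)}$ from the rows, and then use elementary row and column operations that pair $x_i$ with $\x_i$ and the $+h_m$ column with the $-h_m$ column (leaving one unpaired column to join the all-ones row) so as to reach a block-triangular determinant whose diagonal blocks are the numerators in \eqref{oodef} and \eqref{oedef}. The only differences are cosmetic — the paper reverses the right-hand columns and uses one-sided column subtractions and row additions (its identity \eqref{detid3}) where you use symmetric sum/difference operations — and your sketch for \eqref{fact3'}, whose details the paper omits, is consistent with the modification it indicates.
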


Note that the $\pm$ on the LHS of~\eqref{fact3'} indicates that either $+$ or~$-$ can be used to give a valid equation.

For the case in which $(\lambda_0,\ldots,\lambda_n)$ is a partition and $k$ is an integer, the shapes of the partitions on 
the LHSs of~\eqref{fact3} and \eqref{fact3'} are illustrated in Figure~\ref{thm3fig}.

\begin{figure}[htbp!]
\begin{center}
\begin{tabular}{c@{\;\quad}|@{\quad}c@{\;\quad}|@{\quad}c}
\includegraphics[scale=0.5]{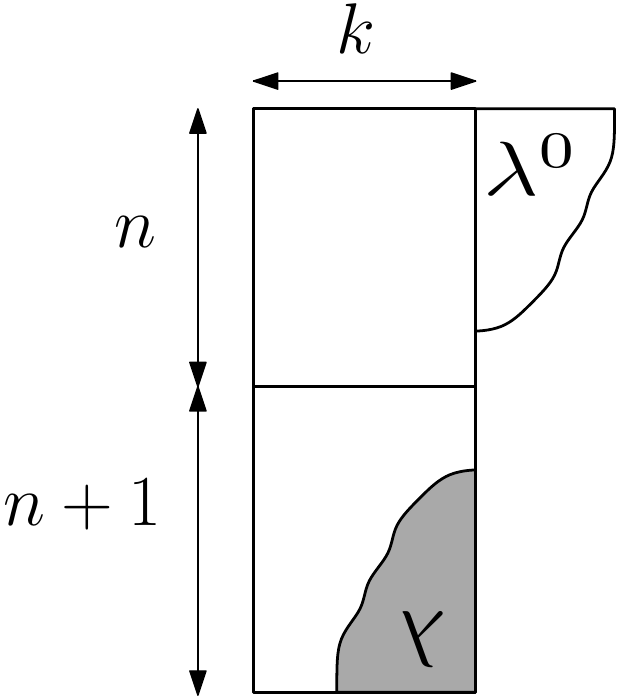}&\includegraphics[scale=0.5]{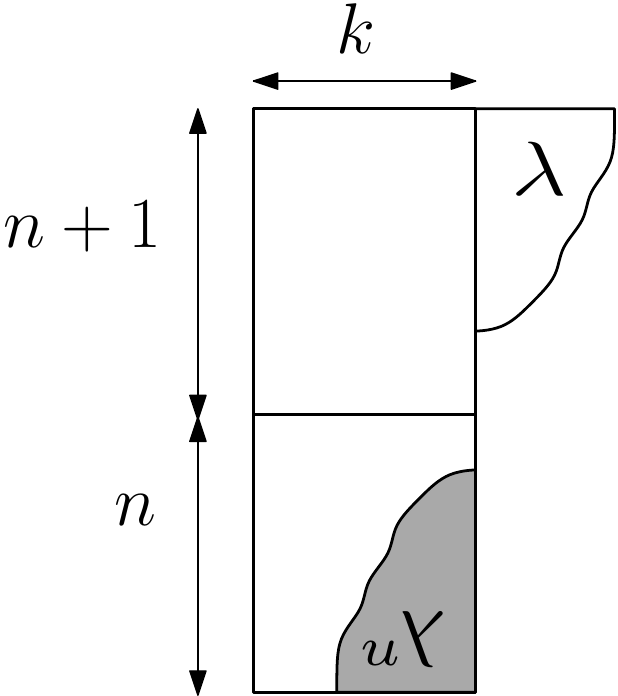}&\includegraphics[scale=0.5]{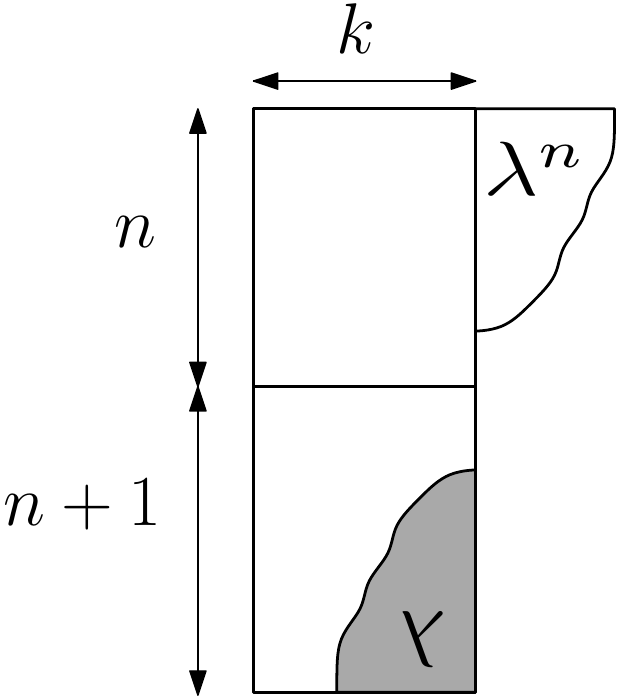}\end{tabular}
\caption{The shapes of the partitions on the LHSs of~\eqref{fact3} and \eqref{fact3'}, where 
$\lambda=(\lambda_0,\ldots,\lambda_n)$ is a partition,
$\lambda^0$ denotes $(\lambda_1,\ldots,\lambda_n)$,
$\lambda^n$ denotes $(\lambda_0,\ldots,\lambda_{n-1})$,
and $k\ge\lambda_0$ is an integer.
Also, the inverted $\lambda$ and $\lambda^n$ denote $(\lambda_n,\ldots,\lambda_0)$
and $(\lambda_{n-1},\ldots,\lambda_0)$, respectively, and 
shading indicates that the corresponding region should be removed.
The three diagrams correspond to~\eqref{fact3}, 
the $+$ case of~\eqref{fact3'} and the~$-$ case of~\eqref{fact3'}, respectively.}
\label{thm3fig}\end{center}\end{figure}

In certain cases, the even orthogonal characters in Theorem~\ref{thm3} can be expressed, up to simple prefactors, as odd orthogonal characters, 
as stated in the following results. 
\begin{proposition}\label{prop1}
If  
\begin{equation}\label{lam4}(\lambda_0,\ldots,\lambda_n)=(nb+a,\ldots,2b+a,b+a,a),\end{equation}
for a nonnegative integer or positive half-integer $a$, and a nonnegative integer~$b$, then
\begin{equation}\label{oeodd1}(1+\delta_{\lambda_n,0})\,
\oe_{(\lambda_0,\ldots,\lambda_n)}(x_1,\ldots,x_n,1)=2\prod_{i=1}^n\Biggl(\,\sum_{j=-b/2}^{b/2}x_i^j\!\Biggr)\,
\oo_{(\lambda_0-b/2,\ldots,\lambda_{n-1}-b/2)}(x_1,\ldots,x_n).\end{equation}
\end{proposition}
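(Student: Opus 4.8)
\emph{Proof proposal.} The plan is to substitute the determinantal definitions~\eqref{oodef} and~\eqref{oedef} into~\eqref{oeodd1} and reduce the identity to a single determinant manipulation. First I would apply~\eqref{oedef} to the left side, viewed as an even orthogonal character in the $n+1$ variables $x_1,\ldots,x_n,1$ indexed by the $(n+1)$-part partition $(\lambda_0,\ldots,\lambda_n)$, whose last part is $\lambda_n=a$; the prefactor $(1+\delta_{\lambda_n,0})$ then cancels exactly the factor $(1+\delta_{\lambda_n,0})$ in the denominator of~\eqref{oedef}. Setting $\x_{n+1}=1$, the Weyl denominator contributes, besides $\prod_{1\le i<j\le n}(x_i+\x_i-x_j-\x_j)$, the extra factors $\prod_{i=1}^n(x_i+\x_i-2)=\prod_{i=1}^n(x_i^\half-\x_i^\half)^2$. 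On the right side I would rewrite the prefactor using the telescoping sum $\sum_{j=-b/2}^{b/2}x_i^j=(x_i^{(b+1)/2}-\x_i^{(b+1)/2})/(x_i^\half-\x_i^\half)$ and apply~\eqref{oodef}; one factor $\prod_{i=1}^n(x_i^\half-\x_i^\half)$ from the denominator of~\eqref{oodef} combines with the prefactor to reproduce the same denominator $\prod_{i=1}^n(x_i^\half-\x_i^\half)^2\prod_{1\le i<j\le n}(x_i+\x_i-x_j-\x_j)$ as on the left. Thus both sides share a common denominator, and the proposition reduces to an identity between the two numerator determinants (valid as an identity of Laurent polynomials, so the substitution $x_{n+1}=1$ is legitimate).

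The decisive structural input is that the partition~\eqref{lam4} makes the relevant exponents an arithmetic progression. Writing $c=(b+1)/2$, the exponents in the even orthogonal numerator are $e_j=\lambda_{j-1}+(n+1)-j=a+(n+1-j)(b+1)$ for $j=1,\ldots,n+1$, so that $e_j-e_{j+1}=b+1=2c$, while those in the odd orthogonal numerator are $f_j=e_j-c$ for $j=1,\ldots,n$. The key algebraic identity I would use is
\begin{equation*}(x^c-\x^c)(x^{e_j-c}-\x^{e_j-c})=(x^{e_j}+\x^{e_j})-(x^{e_j-2c}+\x^{e_j-2c})=(x^{e_j}+\x^{e_j})-(x^{e_{j+1}}+\x^{e_{j+1}}),\end{equation*}
which expresses the product of the factor $x^c-\x^c$ with an odd-type entry as a difference of two consecutive even-type entries.

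With this in hand, I would perform column operations on the $(n+1)\times(n+1)$ even orthogonal numerator $\det_{1\le i,j\le n+1}(x_i^{e_j}+\x_i^{e_j})$ with $x_{n+1}=1$: for each $j=1,\ldots,n$, subtract column $j+1$ from column~$j$. This is a unipotent, hence determinant-preserving, transformation, and by the key identity it turns the entry in row $i$, column $j$ into $(x_i^c-\x_i^c)(x_i^{e_j-c}-\x_i^{e_j-c})$ for $j\le n$, leaving column $n+1$ equal to $x_i^a+\x_i^a$. In the last row, where $x_{n+1}=1$, the factor $x_{n+1}^c-\x_{n+1}^c$ vanishes, so this row becomes $(0,\ldots,0,2)$. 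A Laplace expansion along the last row then produces $2$ times the $n\times n$ minor in rows and columns $1,\ldots,n$, and factoring $x_i^c-\x_i^c$ out of each row $i$ of that minor yields $2\prod_{i=1}^n(x_i^c-\x_i^c)\,\det_{1\le i,j\le n}(x_i^{f_j}-\x_i^{f_j})$, which is precisely the numerator on the right side.

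I expect the main obstacle to be organizational rather than conceptual: one must verify that the partition~\eqref{lam4} really does produce exponents in arithmetic progression with the exact common difference $b+1$, and keep careful track of the bookkeeping in the column operations (the direction of subtraction and the resulting vanishing of the bottom row). The case where $a$ is a positive half-integer requires no separate treatment, since the determinant identities and the key algebraic identity above hold verbatim for half-integer exponents.
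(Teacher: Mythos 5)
Your proposal is correct and follows essentially the same route as the paper's proof: substitute the determinant expression~\eqref{oedef} with $x_{n+1}=1$, subtract adjacent columns (exploiting that~\eqref{lam4} makes consecutive exponents differ by $b+1$), note that the bottom row becomes $(0,\ldots,0,2)$, expand, and factor $x_i^{(b+1)/2}-\x_i^{(b+1)/2}=\bigl(x_i^\half-\x_i^\half\bigr)\sum_{j=-b/2}^{b/2}x_i^j$ out of each row to recover the numerator of~\eqref{oodef}. The bookkeeping of exponents, denominators and the $(1+\delta_{\lambda_n,0})$ cancellation all check out.
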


Note that if $b$ is odd, then $j$ in the sum on the RHS of~\eqref{oeodd1} ranges over all half-integers from~$-b/2$ to~$b/2$.

\begin{proposition}\label{prop2}
If 
\begin{equation}\label{lam5}\textstyle(\lambda_0,\ldots,\lambda_n)=\bigl(\lfloor\frac{n+1}{2}\rfloor b+(-1)^na,\ldots,2b+a,2b-a,b+a,b-a,a\bigr),\end{equation}
for a nonnegative integer or positive half-integer $a$, and an integer $b\ge2a$, then
\begin{equation}\label{oeodd2}\quad\oe_{(\lambda_0+\half,\ldots,\lambda_n+\half)}(x_1,\ldots,x_n,1)
=2\prod_{i=1}^n\Biggl(\,\sum_{j=-(b+1)/2}^{(b+1)/2}x_i^j\!\Biggr)\,
\oo_{(\lambda_0-b/2,\ldots,\lambda_{n-1}-b/2)}(x_1,\ldots,x_n).\end{equation}
\end{proposition}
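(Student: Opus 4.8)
The plan is to compute the defining determinant of the even orthogonal character on the left of~\eqref{oeodd2} directly, by column operations that exploit the fact that one of its $n+1$ arguments is~$1$, and to read the right-hand side off the result. Throughout, write $N=n+1$, abbreviate $B=b+2$ and $c=a+\half$, and denote the arguments by $y=(x_1,\ldots,x_n,1)$. First I would record the exponents of the determinant~\eqref{oedef} for $\oe_{(\lambda_0+\half,\ldots,\lambda_n+\half)}(y_1,\ldots,y_N)$: the exponent in column~$j$ is $e_j=(\lambda_{j-1}+\half)+N-j$. A direct computation from the double-staircase shape~\eqref{lam5} shows that, as $j$ runs over $1,\ldots,N$, the $e_j$ form the symmetric configuration consisting of the paired values $kB-c$ and $kB+c$ (for $k$ in the relevant range) together with the single unpaired value $c=e_N$, which, being smallest, occupies the last column. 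The hypothesis $b\ge2a$ is precisely what makes these exponents strictly decreasing, so the configuration is genuine. I would also note $e_N=a+\half\neq0$, so the prefactor $(1+\delta)$ in~\eqref{oedef} is~$1$, matching its absence in~\eqref{oeodd2}.

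The engine of the argument is the elementary identity
\[(x^{B/2}-\x^{B/2})\,(x^{g}-\x^{g})=(x^{e}+\x^{e})-(x^{e-B}+\x^{e-B}),\qquad g=e-B/2,\]
obtained by expanding the left-hand side. Since $x^{e-B}+\x^{e-B}$ is the entry of the $\oe$ determinant belonging to the next-smaller exponent of the same $\pm c$ family, the right-hand side is a difference of two columns. I would therefore perform the successive-difference column operations that replace each paired column~$e$ by $e$ minus the next-smaller column of its family, the chains terminating at the columns $B-c$ and $B+c$, each of which subtracts the unpaired column~$c$ (itself left unchanged). Ordered by decreasing exponent this is a unipotent transformation and hence preserves the determinant. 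Two effects occur at once: every modified column acquires the common row factor $x_i^{B/2}-\x_i^{B/2}$, and, because $1^{B/2}-1^{-B/2}=0$, the last row (coming from the argument~$1$) becomes zero in every modified column while retaining the value~$2$ in the unpaired column~$c$.

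Expanding the transformed determinant along its last row then yields $2$ times the $n\times n$ minor obtained by deleting row~$N$ and column~$N$; since both are the last ones the cofactor sign is~$+1$, and no reordering is needed because the surviving columns $1,\ldots,n$ are already in decreasing order. Pulling $x_i^{B/2}-\x_i^{B/2}$ out of row~$i$ leaves exactly the numerator determinant of $\oo_{(\lambda_0-b/2,\ldots,\lambda_{n-1}-b/2)}(x_1,\ldots,x_n)$, because the modified exponents $g=e-B/2$ coincide with the exponents $\nu_j+n-j+\half$ of~\eqref{oodef}. Finally I would handle the denominator: separating from $\prod_{1\le i<j\le N}(y_i+\bar{y}_i-y_j-\bar{y}_j)$ the factors involving the argument~$1$ produces $\prod_{i=1}^n(x_i+\x_i-2)=\prod_{i=1}^n(x_i^\half-\x_i^\half)^2$; one power cancels the denominator of $\oo$, while the other combines with $x_i^{B/2}-\x_i^{B/2}$ to give $\prod_{i=1}^n\frac{x_i^{B/2}-\x_i^{B/2}}{x_i^\half-\x_i^\half}=\prod_{i=1}^n\sum_{j=-(b+1)/2}^{(b+1)/2}x_i^j$, the asserted prefactor. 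Assembling these pieces gives exactly~\eqref{oeodd2}.

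The main obstacle is the combinatorial bookkeeping rather than any deep step. I must verify carefully that the multiset $\{e_j\}$ arising from~\eqref{lam5} is precisely $\{c\}\cup\{kB\pm c\}$, and I must check both parities of~$n$: when $n$ is odd the largest exponent $(m+1)B-c$ is unpaired at the top, and one has to confirm that the family-differencing still terminates correctly at the unpaired column~$c$ and that the surviving exponents match those of $\oo$. A secondary point is to confirm that the computation is uniform in the two cases of the hypothesis (nonnegative integer versus positive half-integer~$a$): the exponents are half-integers in one case and integers in the other, but every step is a formal identity among Laurent monomials in $x_i^{\pm\half}$, so a single calculation covers both.
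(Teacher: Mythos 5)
Your proposal is correct and follows essentially the same route as the paper's proof: the same column operations (each column minus the column whose exponent is smaller by $b+2$, with the two smallest paired columns both subtracting the unpaired column of exponent $a+\tfrac12$, so that the row of $2$'s coming from the argument $1$ collapses to a single entry), the same factorization $(x^{b/2+1}-\x^{b/2+1})(x^{g}-\x^{g})$ of the resulting entries, and the same bookkeeping of denominators via $x+\x-2=(x^{\half}-\x^{\half})^2$. Your reparametrization of the exponents as $\{kB\pm c\}\cup\{c\}$ with $B=b+2$, $c=a+\half$ is just a repackaging of the paper's relations $\lambda_{j+1}=\lambda_{j-1}-b$ and $\lambda_n=b-\lambda_{n-1}$, and the parity check you flag does go through.
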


Note that if $b$ is even, then $j$ in the sum on the RHS of~\eqref{oeodd2} ranges over all half-integers from~$-(b+1)/2$ to~$(b+1)/2$.

By slightly modifying the proofs of Theorems~\ref{thm1}--\ref{thm3},
it is possible to obtain certain closely related results. An example is the following variation of Theorems~\ref{thm1} and~\ref{thm3},
for which we omit the proof.
For any partition $(\lambda_1,\ldots,\lambda_n)$ and integer $k\ge\lambda_1$,
or half-partition $(\lambda_1,\ldots,\lambda_n)$ and half-integer $k\ge\lambda_1$,
\begin{multline}\label{fact4}2\,\prod_{i=1}^{n-1}\bigl(x_i^\half+\x_i^\half\bigr)\,
s_{(k+\lambda_1,\ldots,k+\lambda_n,k-\lambda_n,\ldots,k-\lambda_1)}(x_1,\ldots,x_n,\x_1,\ldots,\x_{n-1},1)\\*[-2mm]
=x_n^k\,\oo_{(\lambda_1,\ldots,\lambda_n)}(x_1,\ldots,x_n)\;\oe_{(\lambda_1+\half,\ldots,\lambda_n+\half)}(x_1,\ldots,x_{n-1},1).\end{multline}

Some remarks regarding the previous results are as follows.
\begin{list}{$\bullet$}{\setlength{\topsep}{0.8mm}\setlength{\labelwidth}{2mm}\setlength{\leftmargin}{6mm}}
\item It can easily be checked that each of the Schur polynomials is indexed by
a valid partition, and each of the orthogonal characters is indexed by a valid partition or half-partition.
\item For a partition $(\lambda_1,\ldots,\lambda_a)$ with $\lambda_1\le b$, the \emph{complement} of $(\lambda_1,\ldots,\lambda_a)$ in
an $a\times b$ rectangle is defined to be the partition $(b-\lambda_a,\ldots,b-\lambda_1)$.  It can easily be checked that, for fixed~$n$ and~$k$, 
the partitions which can occur on the LHS of~\eqref{fact1} are precisely those which are self-complementary
in a $(2n)\times(2k)$ rectangle (where $2n$ is necessarily even, since $n$ is an integer, but $2k$ can be even or odd, since $k$ 
can be an integer or half-integer).
Similarly, for fixed~$n$ and~$k$, and $\lambda_n$ set to~$0$ (which implies that 
$(\lambda_0,\ldots,\lambda_n)$ is a partition and $k$ is an integer), the partitions which can occur on the LHS of~\eqref{fact3'} are 
precisely those which are self-complementary in a $(2n+1)\times(2k)$ rectangle
(where $2n+1$ and $2k$ are necessarily odd and even, respectively). It follows that the LHSs 
of~\eqref{fact1} and~\eqref{fact3'} include all self-complementary partitions, since a partition cannot be 
self-complementary in an $a\times b$ rectangle if $a$ and $b$ are both odd.
\item Partitions with the same forms as those on the LHSs of~\eqref{fact1}, ~\eqref{fact3'} and~\eqref{fact4}
have appeared previously in results of Stanley~\cite[Lem.~3.3]{Sta86a}, \cite[Exer.~7.106a]{Sta99}.
\item The RHSs of~\eqref{fact1},~\eqref{fact3} and~\eqref{fact3'} are explicitly independent of $k$, and
this independence for each LHS follows from~\eqref{Schurtrans}.
Similarly, both sides of~\eqref{fact2} are independent of~$k_1$ and~$k_2$.
\item Due to~\eqref{Schurtrans} and~\eqref{Schurrecip} (and the fact that a Schur polynomial is a symmetric function), 
Theorem~\ref{thm2} remains valid if the condition
$k_1\ge\lambda_0$ is replaced by $k_1\ge\lambda_1$ and the first Schur polynomial in~\eqref{fact2} is
replaced by $s_{(k_1+\lambda_0,\ldots,k_1+\lambda_{n-1},k_1-\lambda_n,\ldots,k_1-\lambda_1)}(x_1,\ldots,x_n,\x_1,\ldots,\x_n)$,
or the condition $k_2\ge\lambda_0$ is replaced by $k_2\ge\lambda_1$ and the second Schur polynomial in~\eqref{fact2}
is replaced by $s_{(k_2+\lambda_0,\ldots,k_2+\lambda_n,k_2-\lambda_{n-1},\ldots,k_2-\lambda_1)}(x_1,\ldots,x_n,\x_1,\ldots,\x_n)$.
Hence, the apparent asymmetry in the structure of each of the partitions on the LHS of~\eqref{fact2} can be reversed.
\item Similarly,~\eqref{fact3} remains valid if the condition $k\ge\lambda_0$
is replaced by $k\ge\lambda_1$, and the Schur polynomial is replaced by
$s_{(k+\lambda_0,\ldots,k+\lambda_n,k-\lambda_n,\ldots,k-\lambda_1)}(x_1,\ldots,x_n,\x_1,\ldots,\x_n,1)$.
\item The equality between the $+$ and $-$ cases of the Schur polynomial in~\eqref{fact3'} follows from~\eqref{Schurrecip}.
\end{list}

\subsection{Alternative statements of the results}\label{altstate}
In each of~\eqref{fact1}--\eqref{fact3'} and~\eqref{fact4}, and in certain cases of~\eqref{oeodd1} and~\eqref{oeodd2}, one of
the two orthogonal characters is indexed by a partition and the other is indexed by a half-partition.  However, 
by using~\eqref{ooshifted} and~\eqref{oeshifted}, each of these results can be expressed in a form in which all of the 
characters are indexed by partitions. This will now be done for Theorems~\ref{thm1} and~\ref{thm2}, and the other cases will then
be discussed briefly. For simplicity, and for later convenience in Section~\ref{applications}, the alternative statements of 
Theorems~\ref{thm1} and~\ref{thm2} will assume that $k=\lambda_1$ and $k_1=k_2=\lambda_0$.

A restatement of Theorem~\ref{thm1} (with $k=\lambda_1$) is as follows. For any partition $(\lambda_1,\ldots,\lambda_n)$,
\begin{align}\notag&s_{(\lambda_1+\lambda_1,\ldots,\lambda_1+\lambda_n,\lambda_1-\lambda_n,\ldots,\lambda_1-\lambda_1)}(x_1,\ldots,x_n,\x_1,\ldots,\x_n)\\*
\label{fact11}&\hspace{50mm}=(-1)^{\sum_{i=1}^n\lambda_i}\,\oo_{(\lambda_1,\ldots,\lambda_n)}(x_1,\ldots,x_n)\;
\oo_{(\lambda_1,\ldots,\lambda_n)}(-x_1,\ldots,-x_n)
\intertext{and}
\notag&s_{(\lambda_1+\lambda_1+1,\ldots,\lambda_1+\lambda_n+1,\lambda_1-\lambda_n,\ldots,\lambda_1-\lambda_1)}(x_1,\ldots,x_n,\x_1,\ldots,\x_n)\\*
\label{fact12}&\hspace{50mm}=\sp_{(\lambda_1,\ldots,\lambda_n)}(x_1,\ldots,x_n)\;\oe_{(\lambda_1+1,\ldots,\lambda_n+1)}(x_1,\ldots,x_n).\end{align}
Note that~\eqref{fact11} corresponds to the case of Theorem~\ref{thm1} in which $(\lambda_1,\ldots,\lambda_n)$ is a partition (and~$k=\lambda_1$) 
in~\eqref{fact1}, and~\eqref{oeshifted} is applied to the even orthogonal character.
On the other hand,~\eqref{fact12} corresponds to the case of Theorem~\ref{thm1}
in which $(\lambda_1,\ldots,\lambda_n)$ is a half-partition (and~$k=\lambda_1$) in~\eqref{fact1}.  
In this case,~\eqref{fact12} is obtained by replacing $(\lambda_1,\ldots,\lambda_n)$ 
by $\bigl(\lambda_1+\frac{1}{2},\ldots,\lambda_n+\frac{1}{2}\bigr)$, and applying~\eqref{ooshifted} to the odd orthogonal character.

A restatement of Theorem~\ref{thm2} (with $k_1=k_2=\lambda_0$) is as follows. For any partition $(\lambda_0,\ldots,\lambda_n)$,
\begin{align}\notag&s_{(\lambda_0+\lambda_1,\ldots,\lambda_0+\lambda_n,\lambda_0-\lambda_{n-1},\ldots,\lambda_0-\lambda_0)}(x_1,\ldots,x_n,\x_1,\ldots,\x_n)\\*
\notag&\hspace{22mm}+
s_{(\lambda_0+\lambda_1,\ldots,\lambda_0+\lambda_{n-1},\lambda_0-\lambda_n,\ldots,\lambda_0-\lambda_0)}(x_1,\ldots,x_n,\x_1,\ldots,\x_n)\\*
\label{fact21}&\hspace{38mm}=(1+\delta_{\lambda_n,0})\,
\sp_{(\lambda_0,\ldots,\lambda_{n-1})}(x_1,\ldots,x_n)\;\oe_{(\lambda_1,\ldots,\lambda_n)}(x_1,\ldots,x_n)
\intertext{and}
\notag&s_{(\lambda_0+\lambda_1+1,\ldots,\lambda_0+\lambda_n+1,\lambda_0-\lambda_{n-1},\ldots,\lambda_0-\lambda_0)}(x_1,\ldots,x_n,\x_1,\ldots,\x_n)\\*
\notag&\hspace{22mm}+
s_{(\lambda_0+\lambda_1+1,\ldots,\lambda_0+\lambda_{n-1}+1,\lambda_0-\lambda_n,\ldots,\lambda_0-\lambda_0)}(x_1,\ldots,x_n,\x_1,\ldots,\x_n)\\*
\label{fact22}&\hspace{38mm}
=(-1)^{\sum_{i=1}^n\lambda_i}\,\oo_{(\lambda_0+1,\ldots,\lambda_{n-1}+1)}(x_1,\ldots,x_n)\;\oo_{(\lambda_1,\ldots,\lambda_n)}(-x_1,\ldots,-x_n).\end{align}
Note that~\eqref{fact21} corresponds to the case of Theorem~\ref{thm2} in which $(\lambda_0,\ldots,\lambda_n)$ is a partition (and~$k_1=k_2=\lambda_0$) 
in~\eqref{fact2}, and~\eqref{ooshifted} is applied to the odd orthogonal character.
On the other hand,~\eqref{fact22} corresponds to the case of Theorem~\ref{thm2}
in which $(\lambda_0,\ldots,\lambda_n)$ is a half-partition (and~$k_1=k_2=\lambda_0$) in~\eqref{fact2}.
In this case,~\eqref{fact22} is obtained by replacing $(\lambda_0,\ldots,\lambda_n)$ 
by $\bigl(\lambda_0+\frac{1}{2},\ldots,\lambda_n+\frac{1}{2}\bigr)$,
and applying~\eqref{oeshifted} to the even orthogonal character.

Theorem~\ref{thm3}, Propositions~\ref{prop1} and~\ref{prop2}, and~\eqref{fact4} can also be restated in forms analogous to 
those given above for Theorems~\ref{thm1} and~\ref{thm2}.  

The case of Theorem~\ref{thm3} in which $(\lambda_0,\ldots,\lambda_n)$ is a partition 
and~$k$ is an integer gives reformulations of~\eqref{fact3} and~\eqref{fact3'} 
in which~\eqref{oeshifted} is applied to the even orthogonal character in~\eqref{fact3},
and~\eqref{ooshifted} is applied to the odd orthogonal character in~\eqref{fact3'}.
The case of Theorem~\ref{thm3} in which $(\lambda_0,\ldots,\lambda_n)$ is a half-partition 
and~$k$ is a half-integer gives reformulations of~\eqref{fact3} and~\eqref{fact3'} 
in which~\eqref{ooshifted} is applied to the odd orthogonal character in~\eqref{fact3},
and~\eqref{oeshifted} is applied to the even orthogonal character in~\eqref{fact3'}.

Propositions~\ref{prop1} and~\ref{prop2} lead to four cases each. For 
example, these are as follows for Proposition~\ref{prop1}.
If $a$ is an integer and $b$ is even, then~\eqref{oeodd1} is left unchanged. 
If $a$ is an integer and $b$ is odd, then~\eqref{ooshifted} is applied to the odd orthogonal character in~\eqref{oeodd1}.
If $a$ is a half-integer and $b$ is odd, then~\eqref{oeshifted} is applied to the even orthogonal character in~\eqref{oeodd1}.
If $a$ is a half-integer and~$b$ is even, then~\eqref{ooshifted} and~\eqref{oeshifted} are applied to the odd and even orthogonal characters,
respectively, in~\eqref{oeodd1}.

\section{Proofs}\label{proofs}
In this section, we provide proofs of Theorems~\ref{thm1}--\ref{thm3} and Propositions~\ref{prop1} and~\ref{prop2}.
Note that, essentially, these proofs rely only on the determinant expressions~\eqref{Schurdef},~\eqref{oodef} 
and~\eqref{oedef} for Schur polynomials and orthogonal characters, and
the application of standard determinant operations.
\begin{proof}[Proof of Theorem~\ref{thm1}]
Using the Schur polynomial expression \eqref{Schurdef}, the Schur polynomial on the LHS of~\eqref{fact1} is
\begin{multline}\label{det1}
s_{(k+\lambda_1,\ldots,k+\lambda_n,k-\lambda_n,\ldots,k-\lambda_1)}(x_1,\ldots,x_n,\x_1,\ldots,\x_n)\\[1mm]
=\frac{\rule[-8mm]{0mm}{0mm}\det\left(\begin{array}{c|c}
\bigl(x_i^{k+\lambda_j+2n-j}\bigr)_{\sss1\le i,j\le n}&\bigl(x_i^{k-\lambda_{n+1-j}+n-j}\bigr)_{\sss1\le i,j\le n}\\[2mm]\hline\rule{0mm}{6mm}
\bigl(\x_i^{k+\lambda_j+2n-j}\bigr)_{\sss1\le i,j\le n}&\bigl(\x_i^{k-\lambda_{n+1-j}+n-j}\bigr)_{\sss1\le i,j\le n}\end{array}\right)}
{\rule{0mm}{4mm}\prod_{i=1}^n(x_i-\x_i)\,\prod_{1\le i<j\le n}(x_i-x_j)(\x_i-\x_j)(x_i-\x_j)(x_j-\x_i)}.\end{multline}

By multiplying row~$i$ in the top blocks of the matrix in~\eqref{det1} by $\x_i^{k+n-\half}$
and row~$i$ in the bottom blocks by $x_i^{k+n-\half}$, for each $i=1,\ldots,n$, and then reversing the order of the columns
in the right blocks, it follows that the numerator of the RHS of~\eqref{det1} is
\begin{multline*}\det\left(\begin{array}{c|c}
\bigl(x_i^{\lambda_j+n-j+\half}\bigr)_{\sss1\le i,j\le n}&\bigl(\x_i^{\lambda_{n+1-j}+j-\half}\bigr)_{\sss1\le i,j\le n}\\[2mm]\hline\rule{0mm}{6mm}
\bigl(\x_i^{\lambda_j+n-j+\half}\bigr)_{\sss1\le i,j\le n}&\bigl(x_i^{\lambda_{n+1-j}+j-\half}\bigr)_{\sss1\le i,j\le n}\end{array}\right)\\*[1mm]
=(-1)^{n(n-1)/2}\,\det\left(\begin{array}{c|c}
\bigl(x_i^{\lambda_j+n-j+\half}\bigr)_{\sss1\le i,j\le n}&\bigl(\x_i^{\lambda_j+n-j+\half}\bigr)_{\sss1\le i,j\le n}\\[2mm]\hline\rule{0mm}{6mm}
\bigl(\x_i^{\lambda_j+n-j+\half}\bigr)_{\sss1\le i,j\le n}&\bigl(x_i^{\lambda_j+n-j+\half}\bigr)_{\sss1\le i,j\le n}\end{array}\right).\end{multline*}

Now note that, for any $n\times n$ matrices $A$ and $B$,
\begin{equation}\label{detid1}
\det\left(\begin{array}{c|c}A&B\\\hline\rule{0mm}{4.4mm}B&A\end{array}\right)=\det\left(\begin{array}{c|c}A-B&B\\\hline\rule{0mm}{4.4mm}B-A&A\end{array}\right)=
\det\left(\begin{array}{c|c}A-B&B\\\hline\rule{0mm}{4.4mm}0&A+B\end{array}\right)=\det(A-B)\det(A+B),\end{equation}
where these equalities are obtained by subtracting the right blocks of the matrix from the left blocks,
adding the top blocks to the bottom blocks, and applying the standard result
that the determinant of a block triangular matrix is the product of the determinants of its diagonal blocks.

Taking $A_{ij}=x_i^{\lambda_j+n-j+\half}$ and $B_{ij}=\x_i^{\lambda_j+n-j+\half}$, for $1\le i,j\le n$, in~\eqref{detid1},
it follows that the numerator of the RHS of~\eqref{det1} is
\begin{equation*}(-1)^{n(n-1)/2}\det_{1\le i,j\le n}\Bigl(x_i^{\lambda_j+n-j+\half}-\x_i^{\lambda_j+n-j+\half}\Bigr)
\;\det_{1\le i,j\le n}\Bigl(x_i^{\lambda_j+n-j+\half}+\x_i^{\lambda_j+n-j+\half}\Bigr).\end{equation*}

Finally, by observing that the denominator of the RHS of~\eqref{det1} is
\begin{equation}\label{den1}\textstyle(-1)^{n(n-1)/2}\prod_{i=1}^n(x_i-\x_i)\,\prod_{1\le i<j\le n}(x_i+\x_i-x_j-\x_j)^2,\end{equation}
and using the odd and even orthogonal character expressions~\eqref{oodef} and~\eqref{oedef}, it follows
that the RHS of~\eqref{det1} is
\begin{equation*}\textstyle
\oo_{(\lambda_1,\ldots,\lambda_n)}(x_1,\ldots,x_n)\;\oe_{(\lambda_1+\half,\ldots,\lambda_n+\half)}(x_1,\ldots,x_n)
\big/\prod_{i=1}^n\bigl(x_i^\half+\x_i^\half\bigr),\end{equation*}
as required.
\end{proof}

\begin{proof}[Proof of Theorem~\ref{thm2}]
Using the Schur polynomial expression \eqref{Schurdef}, the sum of Schur polynomials on the LHS of~\eqref{fact2} is
\begin{multline}\label{det2}
s_{(k_1+\lambda_1,\ldots,k_1+\lambda_n,k_1-\lambda_{n-1},\ldots,k_1-\lambda_0)}(x_1,\ldots,x_n,\x_1,\ldots,\x_n)\\*[-1.5mm]
\qquad\qquad\qquad{}+
s_{(k_2+\lambda_1,\ldots,k_2+\lambda_{n-1},k_2-\lambda_n,\ldots,k_2-\lambda_0)}(x_1,\ldots,x_n,\x_1,\ldots,\x_n)\\*[1mm]
\shoveleft{=\left(\det\left(\begin{array}{c|c}
\bigl(x_i^{k_1+\lambda_j+2n-j}\bigr)_{\begin{subarray}{l}\sss1\le i\le n\\[0.3mm]\sss1\le j\le n\end{subarray}}&
\bigl(x_i^{k_1-\lambda_{n-j}+n-j}\bigr)_{\begin{subarray}{l}\sss1\le i\le n\\[0.3mm]\sss1\le j\le n\end{subarray}}\\[3mm]\hline\rule{0mm}{6mm}
\bigl(\x_i^{k_1+\lambda_j+2n-j}\bigr)_{\begin{subarray}{l}\sss1\le i\le n\\[0.3mm]\sss1\le j\le n\end{subarray}}&
\bigl(\x_i^{k_1-\lambda_{n-j}+n-j}\bigr)_{\begin{subarray}{l}\sss1\le i\le n\\[0.3mm]\sss1\le j\le n\end{subarray}}\end{array}\right)\right.}\\[2mm]
\shoveright{+\left.\det\left(\begin{array}{c|c}
\bigl(x_i^{k_2+\lambda_j+2n-j}\bigr)_{\begin{subarray}{l}\sss1\le i\le n\\[0.3mm]\sss1\le j\le n-1\end{subarray}}&
\bigl(x_i^{k_2-\lambda_{n-j}+n-j}\bigr)_{\begin{subarray}{l}\sss1\le i\le n\\[0.3mm]\sss0\le j\le n\end{subarray}}\\[3mm]\hline\rule{0mm}{6mm}
\bigl(\x_i^{k_2+\lambda_j+2n-j}\bigr)_{\begin{subarray}{l}\sss1\le i\le n\\[0.3mm]\sss1\le j\le n-1\end{subarray}}&
\bigl(\x_i^{k_2-\lambda_{n-j}+n-j}\bigr)_{\begin{subarray}{l}\sss1\le i\le n\\[0.3mm]\sss0\le j\le n\end{subarray}}\end{array}\right)\right)\bigg/
\qquad\qquad}\\
\textstyle\Bigl(\prod_{i=1}^n(x_i-\x_i)\,\prod_{1\le i<j\le n}(x_i-x_j)(\x_i-\x_j)(x_i-\x_j)(x_j-\x_i)\Bigr).\end{multline}

By multiplying row~$i$ in the top blocks of the first matrix in~\eqref{det2} by $\x_i^{k_1+n}$,
row~$i$ in the bottom blocks of the first matrix by $x_i^{k_1+n}$,
row~$i$ in the top blocks of the second matrix by $\x_i^{k_2+n}$, and
row~$i$ in the bottom blocks of the second matrix by $x_i^{k_2+n}$,
for each $i=1,\ldots,n$, it follows that the numerator of the RHS of~\eqref{det2} is
\begin{equation}\label{det2'}\det\left(\begin{array}{c|c}
\bigl(x_i^{\lambda_j+n-j}\bigr)_{\begin{subarray}{l}\sss1\le i\le n\\[0.3mm]\sss1\le j\le n\end{subarray}}&
\bigl(\x_i^{\lambda_{n-j}+j}\bigr)_{\begin{subarray}{l}\sss1\le i\le n\\[0.3mm]\sss1\le j\le n\end{subarray}}\\[3mm]\hline\rule{0mm}{6mm}
\bigl(\x_i^{\lambda_j+n-j}\bigr)_{\begin{subarray}{l}\sss1\le i\le n\\[0.3mm]\sss1\le j\le n\end{subarray}}&
\bigl(x_i^{\lambda_{n-j}+j}\bigr)_{\begin{subarray}{l}\sss1\le i\le n\\[0.3mm]\sss1\le j\le n\end{subarray}}\end{array}\right)
+\det\left(\begin{array}{c|c}
\bigl(x_i^{\lambda_j+n-j}\bigr)_{\begin{subarray}{l}\sss1\le i\le n\\[0.3mm]\sss1\le j\le n-1\end{subarray}}&
\bigl(\x_i^{\lambda_{n-j}+j}\bigr)_{\begin{subarray}{l}\sss1\le i\le n\\[0.3mm]\sss0\le j\le n\end{subarray}}\\[3mm]\hline\rule{0mm}{6mm}
\bigl(\x_i^{\lambda_j+n-j}\bigr)_{\begin{subarray}{l}\sss1\le i\le n\\[0.3mm]\sss1\le j\le n-1\end{subarray}}&
\bigl(x_i^{\lambda_{n-j}+j}\bigr)_{\begin{subarray}{l}\sss1\le i\le n\\[0.3mm]\sss0\le j\le n\end{subarray}}\end{array}\right).\end{equation}

Since the two matrices in~\eqref{det2'} differ by only a single column (specifically, column~$n$), the sum of their
determinants is the single determinant
\begin{equation*}\det\left(\begin{array}{c|c|c}
\bigl(x_i^{\lambda_j+n-j}\bigr)_{\begin{subarray}{l}\sss1\le i\le n\\[0.3mm]\sss1\le j\le n-1\end{subarray}}&
\bigl(x_i^{\lambda_n}+\x_i^{\lambda_n}\bigr)_{\sss1\le i\le n}&
\bigl(\x_i^{\lambda_{n-j}+j}\bigr)_{\begin{subarray}{l}\sss1\le i\le n\\[0.3mm]\sss1\le j\le n\end{subarray}}\\[3mm]\hline\rule{0mm}{6mm}
\bigl(\x_i^{\lambda_j+n-j}\bigr)_{\begin{subarray}{l}\sss1\le i\le n\\[0.3mm]\sss1\le j\le n-1\end{subarray}}&
\bigl(x_i^{\lambda_n}+\x_i^{\lambda_n}\bigr)_{\sss1\le i\le n}&
\bigl(x_i^{\lambda_{n-j}+j}\bigr)_{\begin{subarray}{l}\sss1\le i\le n\\[0.3mm]\sss1\le j\le n\end{subarray}}\end{array}\right).\end{equation*}

By reversing the order of the columns in the rightmost blocks of the matrix, it follows that the previous determinant is
\begin{equation*}
(-1)^{n(n-1)/2}\det\left(\begin{array}{c|c|c|c}
\bigl(x_i^{\lambda_j+n-j}\bigr)_{\begin{subarray}{l}\sss1\le i\le n\\[0.3mm]\sss1\le j\le n-1\end{subarray}}&
\bigl(x_i^{\lambda_n}+\x_i^{\lambda_n}\bigr)_{\sss1\le i\le n}&
\bigl(\x_i^{\lambda_0+n}\bigr)_{\sss1\le i\le n}&
\bigl(\x_i^{\lambda_j+n-j}\bigr)_{\begin{subarray}{l}\sss1\le i\le n\\[0.3mm]\sss1\le j\le n-1\end{subarray}}\\[3mm]\hline\rule{0mm}{6mm}
\bigl(\x_i^{\lambda_j+n-j}\bigr)_{\begin{subarray}{l}\sss1\le i\le n\\[0.3mm]\sss1\le j\le n-1\end{subarray}}&
\bigl(x_i^{\lambda_n}+\x_i^{\lambda_n}\bigr)_{\sss1\le i\le n}&
\bigl(x_i^{\lambda_0+n}\bigr)_{\sss1\le i\le n}&
\bigl(x_i^{\lambda_j+n-j}\bigr)_{\begin{subarray}{l}\sss1\le i\le n\\[0.3mm]\sss1\le j\le n-1\end{subarray}}\end{array}\right).\end{equation*}

It can be seen, by applying standard determinant operations, that for any $n\times(n-1)$ matrices~$A$ and $B$, and $n\times1$ matrices $c$, $d$ and $e$,
\begin{align}\notag\det\left(\begin{array}{c|c|c|c}A&c&d&B\\\hline\rule{0mm}{4.4mm}B&c&e&A\end{array}\right)
&=\det\left(\begin{array}{c|c|c|c}A+B&c&d&B\\\hline\rule{0mm}{4.4mm}A+B&c&e&A\end{array}\right)
=\det\left(\begin{array}{c|c|c|c}A+B&c&d&B\\\hline\rule{0mm}{4.4mm}0&0&e-d&A-B\end{array}\right)\\[1mm]
\label{detid2}&=\det(A+B\,|\,c)\,\det(e-d\,|\,A-B).\end{align}

Taking $A_{ij}=x_i^{\lambda_j+n-j}$, $B_{ij}=\x_i^{\lambda_j+n-j}$, $c_i=x_i^{\lambda_n}+\x_i^{\lambda_n}$,
$d_i=\x_i^{\lambda_0+n}$ and $e_i=x_i^{\lambda_0+n}$, for $1\le i\le n$ and $1\le j\le n-1$, in~\eqref{detid2},
it follows that the numerator of the RHS of~\eqref{det2} is
\begin{equation*}(-1)^{n(n-1)/2}\det_{1\le i,j\le n}\Bigl(x_i^{\lambda_j+n-j}+\x_i^{\lambda_j+n-j}\Bigr)
\;\det_{1\le i,j\le n}\Bigl(x_i^{\lambda_{j-1}+n-j+1}-\x_i^{\lambda_{j-1}+n-j+1}\Bigr).\end{equation*}

Finally, by observing that the denominator of the RHS of~\eqref{det2} is the same as the denominator of the RHS of~\eqref{det1} (and hence
given by~\eqref{den1}),
and using the odd and even orthogonal character expressions~\eqref{oodef} and~\eqref{oedef}, it follows
that the RHS of~\eqref{det2} is
\begin{equation*}\textstyle
(1+\delta_{\lambda_n,0})\,\oo_{(\lambda_0+\half,\ldots,\lambda_{n-1}+\half)}(x_1,\ldots,x_n)\;\oe_{(\lambda_1,\ldots,\lambda_n)}(x_1,\ldots,x_n)
\big/\prod_{i=1}^n\bigl(x_i^\half+\x_i^\half\bigr),\end{equation*}
as required.
\end{proof}

\begin{proof}[Proof of Theorem~\ref{thm3}]
Using the Schur polynomial expression \eqref{Schurdef}, the Schur polynomial on the LHS of~\eqref{fact3} is
\begin{multline}\label{det3}
s_{(k+\lambda_1,\ldots,k+\lambda_n,k-\lambda_n,\ldots,k-\lambda_0)}(x_1,\ldots,x_n,\x_1,\ldots,\x_n,1)\\[1mm]
=\frac{\rule[-13mm]{0mm}{0mm}\det\left(\begin{array}{c|c}
\bigl(x_i^{k+\lambda_j+2n-j+1}\bigr)_{\begin{subarray}{l}\sss1\le i\le n\\[0.3mm]\sss1\le j\le n\end{subarray}}&
\bigl(x_i^{k-\lambda_{n-j}+n-j}\bigr)_{\begin{subarray}{l}\sss1\le i\le n\\[0.3mm]\sss0\le j\le n\end{subarray}}\\[3mm]\hline
\rule{0mm}{6mm}\bigl(\x_i^{k+\lambda_j+2n-j+1}\bigr)_{\begin{subarray}{l}\sss1\le i\le n\\[0.3mm]\sss1\le j\le n\end{subarray}}&
\bigl(\x_i^{k-\lambda_{n-j}+n-j}\bigr)_{\begin{subarray}{l}\sss1\le i\le n\\[0.3mm]\sss0\le j\le n\end{subarray}}\\[3mm]\hline
\rule{0mm}{4.5mm}(1)_{\sss1\le j\le n}&(1)_{\sss0\le j\le n}
\end{array}\right)}
{\rule{0mm}{4mm}\prod_{i=1}^n(x_i-\x_i)(x_i-1)(\x_i-1)\,\prod_{1\le i<j\le n}(x_i-x_j)(\x_i-\x_j)(x_i-\x_j)(x_j-\x_i)}.\end{multline}

By multiplying row $i$ in the top blocks of the matrix in~\eqref{det3} by~$\x_i^{k+n+\half}$
and row~$i$ in the central blocks by $x_i^{k+n+\half}$, for each $i=1,\ldots,n$, and reversing the order of the columns
in the right blocks, it follows that the numerator of the RHS of~\eqref{det3} is
\begin{equation*}(-1)^{n(n+1)/2}\det\left(\begin{array}{c|c}
\bigl(x_i^{\lambda_j+n-j+\half}\bigr)_{\begin{subarray}{l}\sss1\le i\le n\\[0.3mm]\sss1\le j\le n\end{subarray}}&
\bigl(\x_i^{\lambda_j+n-j+\half}\bigr)_{\begin{subarray}{l}\sss1\le i\le n\\[0.3mm]\sss0\le j\le n\end{subarray}}\\[3mm]\hline
\rule{0mm}{6mm}\bigl(\x_i^{\lambda_j+n-j+\half}\bigr)_{\begin{subarray}{l}\sss1\le i\le n\\[0.3mm]\sss1\le j\le n\end{subarray}}&
\bigl(x_i^{\lambda_j+n-j+\half}\bigr)_{\begin{subarray}{l}\sss1\le i\le n\\[0.3mm]\sss0\le j\le n\end{subarray}}\\[3mm]\hline
\rule{0mm}{4.5mm}(1)_{\sss1\le j\le n}&(1)_{\sss0\le j\le n}
\end{array}\right).\end{equation*}

It can be seen, by applying standard determinant operations, that
for any matrices $(A_{ij})_{\begin{subarray}{l}\sss1\le i\le n\\[0.3mm]\sss0\le j\le n\end{subarray}}$
and $(B_{ij})_{\begin{subarray}{l}\sss1\le i\le n\\[0.3mm]\sss0\le j\le n\end{subarray}}$,
\begin{align}\notag\det\left(\begin{array}{c|c}(A_{ij})_{\begin{subarray}{l}\sss1\le i\le n\\[0.3mm]\sss1\le j\le n\end{subarray}}&
(B_{ij})_{\begin{subarray}{l}\sss1\le i\le n\\[0.3mm]\sss0\le j\le n\end{subarray}}\\[3mm]\hline
\rule{0mm}{5mm}(B_{ij})_{\begin{subarray}{l}\sss1\le i\le n\\[0.3mm]\sss1\le j\le n\end{subarray}}&
(A_{ij})_{\begin{subarray}{l}\sss1\le i\le n\\[0.3mm]\sss0\le j\le n\end{subarray}}\\[3mm]\hline
\rule{0mm}{4.5mm}(1)_{\sss1\le j\le n}&(1)_{\sss0\le j\le n}\end{array}\right)
&=\det\left(\begin{array}{c|c}(A_{ij}-B_{ij})_{\begin{subarray}{l}\sss1\le i\le n\\[0.3mm]\sss1\le j\le n\end{subarray}}&
(B_{ij})_{\begin{subarray}{l}\sss1\le i\le n\\[0.3mm]\sss0\le j\le n\end{subarray}}\\[3mm]\hline
\rule{0mm}{5mm}(B_{ij}-A_{ij})_{\begin{subarray}{l}\sss1\le i\le n\\[0.3mm]\sss1\le j\le n\end{subarray}}&
(A_{ij})_{\begin{subarray}{l}\sss1\le i\le n\\[0.3mm]\sss0\le j\le n\end{subarray}}\\[3mm]\hline
\rule{0mm}{4.5mm}(0)_{\sss1\le j\le n}&(1)_{\sss0\le j\le n}\end{array}\right)\\[2mm]
\notag&\hspace{-56mm}=\det\left(\begin{array}{c|c}(A_{ij}-B_{ij})_{\begin{subarray}{l}\sss1\le i\le n\\[0.3mm]\sss1\le j\le n\end{subarray}}&
(B_{ij})_{\begin{subarray}{l}\sss1\le i\le n\\[0.3mm]\sss0\le j\le n\end{subarray}}\\[3mm]\hline
\rule{0mm}{5mm}(0)_{\begin{subarray}{l}\sss1\le i\le n\\[0.3mm]\sss1\le j\le n\end{subarray}}&
(A_{ij}+B_{ij})_{\begin{subarray}{l}\sss1\le i\le n\\[0.3mm]\sss0\le j\le n\end{subarray}}\\[3mm]\hline
\rule{0mm}{4.5mm}(0)_{\sss1\le j\le n}&(1)_{\sss0\le j\le n}\end{array}\right)
=\det_{1\le i,j\le n}\bigl(A_{ij}-B_{ij}\bigr)\,
\det\left(\begin{array}{c}(A_{ij}+B_{ij})_{\begin{subarray}{l}\sss1\le i\le n\\[0.3mm]\sss0\le j\le n\end{subarray}}\\[3mm]\hline
\rule{0mm}{4.5mm}(1)_{\sss0\le j\le n}\end{array}\right)\\[2mm]
\label{detid3}&={\textstyle\frac{1}{2}}\det_{1\le i,j\le n}\bigl(A_{ij}-B_{ij}\bigr)\,
\det\left(\begin{array}{c}(A_{ij}+B_{ij})_{\begin{subarray}{l}\sss1\le i\le n\\[0.3mm]\sss0\le j\le n\end{subarray}}\\[3mm]\hline
\rule{0mm}{4.5mm}(2)_{\sss0\le j\le n}\end{array}\right).\end{align}

Taking $A_{ij}=x_i^{\lambda_j+n-j+\half}$ and $B_{ij}=\x_i^{\lambda_j+n-j+\half}$, for $1\le i\le n$ and $0\le j\le n$,
in~\eqref{detid3},
it follows that the numerator of the RHS of~\eqref{det3} is
\begin{equation*}{\textstyle\frac{1}{2}}\,(-1)^{n(n+1)/2}\det_{1\le i,j\le n}\Bigl(x_i^{\lambda_j+n-j+\half}-\x_i^{\lambda_j+n-j+\half}\Bigr)\,
\det\left(\begin{array}{c}\bigl(x_i^{\lambda_j+n-j+\half}+
\x_i^{\lambda_j+n-j+\half}\bigr)_{\begin{subarray}{l}\sss1\le i\le n\\[0.3mm]\sss0\le j\le n\end{subarray}}\\[3mm]\hline
\rule{0mm}{4.5mm}(2)_{\sss0\le j\le n}\end{array}\right).\end{equation*}

By observing that the denominator of the RHS of~\eqref{det3} is
\begin{equation}\label{den3}\textstyle(-1)^{n(n+1)/2}\,\prod_{i=1}^n\bigl(x_i-\x_i\bigr)\bigl(x_i+\x_i-2\bigr)\,
\prod_{1\le i<j\le n}(x_i+\x_i-x_j-\x_j)^2,\end{equation}
and using the odd and even orthogonal character expressions~\eqref{oodef} and~\eqref{oedef}, it follows
that the RHS of~\eqref{det3} is
\begin{equation*}\textstyle
\oo_{(\lambda_1,\ldots,\lambda_n)}(x_1,\ldots,x_n)\;\oe_{(\lambda_0+\half,\ldots,\lambda_n+\half)}(x_1,\ldots,x_n,1)
\big/\bigl(2\prod_{i=1}^n\bigl(x_i^\half+\x_i^\half\bigr)\bigr),\end{equation*}
as required for the proof of~\eqref{fact3}.

The proof of~\eqref{fact3'} can be obtained by slightly modifying the proof of~\eqref{fact3}. The details will be omitted. 
\end{proof}

\begin{proof}[Proof of Proposition~\ref{prop1}]
Using~\eqref{oedef}, we have
\begin{equation}\label{oeodddet1}(1+\delta_{\lambda_n,0})\,\oe_{(\lambda_0,\ldots,\lambda_n)}(x_1,\ldots,x_n,1)
=\frac{\rule[-7.5mm]{0mm}{0mm}\det\left(\begin{array}{c}\bigl(x_i^{\lambda_j+n-j}+
\x_i^{\lambda_j+n-j}\bigr)_{\begin{subarray}{l}\sss1\le i\le n\\[0.3mm]\sss0\le j\le n\end{subarray}}\\[3mm]\hline
\rule{0mm}{4.5mm}(2)_{\sss0\le j\le n}\end{array}\right)}
{\rule{0mm}{4mm}\prod_{i=1}^n\bigl(x_i+\x_i-2\bigr)\,\prod_{1\le i<j\le n}(x_i+\x_i-x_j-\x_j)}.\end{equation}

By subtracting column $j$ of the matrix from column $j-1$, for each $j=1,\ldots,n$, it follows that the numerator of the RHS 
of~\eqref{oeodddet1} is
\begin{equation*}2\det_{1\le i,j\le n}\Bigl(x_i^{\lambda_{j-1}+n-j+1}+\x_i^{\lambda_{j-1}+n-j+1}-x_i^{\lambda_j+n-j}-\x_i^{\lambda_j+n-j}\Bigr).\end{equation*}

Now let $(\lambda_0,\ldots,\lambda_n)$ be given by~\eqref{lam4}.  Then~$\lambda_j=\lambda_{j-1}-b$, for $1\le j\le n$. Therefore,
the numerator of the RHS of~\eqref{oeodddet1} is
\begin{align*}&2\det_{1\le i,j\le n}\bigl(x_i^{\lambda_{j-1}+n-j+1}+\x_i^{\lambda_{j-1}+n-j+1}-x_i^{\lambda_{j-1}-b+n-j}-\x_i^{\lambda_{j-1}-b+n-j}\bigr)\\
&=2\det_{1\le i,j\le n}\Bigl(\bigl(x_i^{(b+1)/2}-\x_i^{(b+1)/2}\bigr)\bigl(x_i^{\lambda_{j-1}-b/2+n-j+1/2}-\x_i^{\lambda_{j-1}-b/2+n-j+1/2}\bigr)\Bigr)\\
&=2\,{\textstyle\prod_{i=1}^n}\bigl(x_i^{(b+1)/2}-\x_i^{(b+1)/2}\bigr)
\det_{1\le i,j\le n}\bigl(x_i^{\lambda_{j-1}-b/2+n-j+1/2}-\x_i^{\lambda_{j-1}-b/2+n-j+1/2}\bigr)\\
&=2\,{\textstyle\prod_{i=1}^n\bigl(x_i^{\half}-\x_i^{\half}\bigr)\bigl(\sum_{j=-b/2}^{b/2}x_i^j\bigr)}
\det_{1\le i,j\le n}\bigl(x_i^{\lambda_{j-1}-b/2+n-j+1/2}-\x_i^{\lambda_{j-1}-b/2+n-j+1/2}\bigr).\end{align*}

Using~\eqref{oodef} and the denominator of the RHS of~\eqref{oeodddet1}, we now obtain~\eqref{oeodd1}, as required.
\end{proof}

\begin{proof}[Proof of Proposition~\ref{prop2}]
Using~\eqref{oedef}, we have
\begin{equation}\label{oeodddet2}\oe_{(\lambda_0+\half,\ldots,\lambda_n+\half)}(x_1,\ldots,x_n,1)
=\frac{\rule[-7.5mm]{0mm}{0mm}\det\left(\begin{array}{c}\bigl(x_i^{\lambda_j+n-j+\half}+
\x_i^{\lambda_j+n-j+\half}\bigr)_{\begin{subarray}{l}\sss1\le i\le n\\[0.3mm]\sss0\le j\le n\end{subarray}}\\[3mm]\hline
\rule{0mm}{4.5mm}(2)_{\sss0\le j\le n}\end{array}\right)}
{\rule{0mm}{4mm}\prod_{i=1}^n\bigl(x_i+\x_i-2\bigr)\,\prod_{1\le i<j\le n}(x_i+\x_i-x_j-\x_j)}.\end{equation}

By subtracting column $j+1$ of the matrix from column $j-1$, for each $j=1,\ldots,n-1$,
and subtracting column $n$ from column $n-1$, it follows that the numerator of the RHS  of~\eqref{oeodddet2} is
\begin{multline*}2\,\det\Bigl(\bigl(x_i^{\lambda_{j-1}+n-j+3/2}+\x_i^{\lambda_{j-1}+n-j+3/2}-x_i^{\lambda_{j+1}+n-j-\half}-\x_i^{\lambda_{j+1}+n-j-\half}
\bigr)_{\begin{subarray}{l}\sss1\le i\le n\\[0.3mm]\sss1\le j\le n-1\end{subarray}}\:\Big|\\
\bigl(x_i^{\lambda_{n-1}+3/2}+\x_i^{\lambda_{n-1}+3/2}-x_i^{\lambda_n+\half}-\x_i^{\lambda_n+\half}\bigr)_{\sss1\le i\le n}\Bigr).\end{multline*}

Now let $(\lambda_0,\ldots,\lambda_n)$ be given by~\eqref{lam5}. Then~$\lambda_{j+1}=\lambda_{j-1}-b$, for $1\le j\le n-1$,
and $\lambda_n=b-\lambda_{n-1}$. Therefore, the numerator of the RHS of~\eqref{oeodddet2} is
\begin{align*}&2\,\det\Bigl(\bigl(x_i^{\lambda_{j-1}+n-j+3/2}+\x_i^{\lambda_{j-1}+n-j+3/2}-x_i^{\lambda_{j-1}-b+n-j-\half}-\x_i^{\lambda_{j-1}-b+n-j-\half}
\bigr)_{\begin{subarray}{l}\sss1\le i\le n\\[0.3mm]\sss1\le j\le n-1\end{subarray}}\:\Big|\\
&\qquad\qquad\qquad\qquad\qquad\qquad\qquad\qquad
\bigl(x_i^{\lambda_{n-1}+3/2}+\x_i^{\lambda_{n-1}+3/2}-\x_i^{\lambda_{n-1}-b-\half}-x_i^{\lambda_{n-1}-b-\half}\bigr)_{\sss1\le i\le n}\Bigr)\\
&=2\det_{1\le i,j\le n}\bigl(x_i^{\lambda_{j-1}+n-j+3/2}+\x_i^{\lambda_{j-1}+n-j+3/2}-x_i^{\lambda_{j-1}-b+n-j-\half}-\x_i^{\lambda_{j-1}-b+n-j-\half}\bigr)\\
&=2\det_{1\le i,j\le n}\Bigl(\bigl(x_i^{b/2+1}-\x_i^{b/2+1}\bigr)\bigl(x_i^{\lambda_{j-1}-b/2+n-j+\half}-\x_i^{\lambda_{j-1}-b/2+n-j+\half}\bigr)\Bigl)\\
&=2\,{\textstyle\prod_{i=1}^n}\bigl(x_i^{b/2+1}-\x_i^{b/2+1}\bigr)
\det_{1\le i,j\le n}\bigl(x_i^{\lambda_{j-1}-b/2+n-j+\half}-\x_i^{\lambda_{j-1}-b/2+n-j+\half}\bigr)\\
&=2\,{\textstyle\prod_{i=1}^n\bigl(x_i^{\half}-\x_i^{\half}\bigr)\bigl(\sum_{j=-(b+1)/2}^{(b+1)/2}x_i^j\bigr)}
\det_{1\le i,j\le n}\bigl(x_i^{\lambda_{j-1}-b/2+n-j+\half}-\x_i^{\lambda_{j-1}-b/2+n-j+\half}\bigr).\end{align*}

Using~\eqref{oodef} and the denominator of the RHS of~\eqref{oeodddet2}, we now obtain~\eqref{oeodd2}, as required.
\end{proof}

\section{Applications}\label{applications}
In this section, we obtain certain previously-known Schur polynomial factorization identities 
as corollaries of Theorems~\ref{thm1}--\ref{thm3}, 
and we consider some combinatorial aspects of our results.  

Throughout the section, the notation 
\begin{equation*}r^n=\underbrace{r,\ldots,r}_{n\text{ times}}\end{equation*}
will be used, where $r$ is a value of a variable of a character, or an entry of a partition or half-partition.

\subsection{General considerations}\label{generalconsid}
In addition to the determinant expressions~\eqref{Schurdef}--\eqref{oedef} for the 
classical group characters, there exist combinatorial expressions in which each function is given as a weighted sum over certain tableaux,
where the weight of a tableau $T$ 
typically has the form $\prod_{i=1}^nx_i^{k_i(T)}$, for integers or half-integers~$k_i(T)$. Hence, the
characters can be regarded as multivariate generating functions for such tableaux.
The simplest case is that of the Schur polynomial $s_{(\lambda_1,\ldots,\lambda_n)}(x_1,\ldots,x_n)$,
which is a weighted sum over all semistandard Young tableaux of shape $(\lambda_1,\ldots,\lambda_n)$
with entries from $\{1,\ldots,n\}$, where the weight of such a tableau $T$ is $\rule{0mm}{2mm}\prod_{i=1}^n x_i^{\text{number of $i$'s in $T$}}$.
In the cases of symplectic and orthogonal characters,~\eqref{spdef}--\eqref{oedef}, there are several 
different types of tableaux which can be used.
For further information regarding combinatorial expressions for characters, see, for example, Krattenthaler~\cite[Appendix]{Kra98}, Fulmek and 
Krattenthaler~\cite[Sec.~3]{FulKra97}, and Sundaram~\cite{Sun90a}.

It follows that the main results of this paper, Theorems~\ref{thm1}--\ref{thm3}, can be interpreted combinatorially as factorization identities satisfied by 
generating functions for tableaux.  Accordingly, it would be interesting to obtain combinatorial proofs of these identities, as will be done 
in~\cite{AyyFis18}.

If all of the variables $x_1,\ldots,x_n$ in the characters~\eqref{Schurdef}--\eqref{oedef} are set to~1, 
then this gives the numbers of associated tableaux, and the dimensions of associated irreducible representations.  
There exist product formulae for each of these numbers, as follows.
For a partition $(\lambda_1,\ldots,\lambda_n)$,
\begin{equation}\label{numSSYT}s_{(\lambda_1,\ldots,\lambda_n)}(1^n)=\frac{\rule[-2.2mm]{0mm}{0mm}\prod_{1\le i<j\le n}(\lambda_i-\lambda_j-i+j)}
{\prod_{i=1}^{n-1}i!}\end{equation}
and
\begin{multline}\label{numsp}\sp_{(\lambda_1,\ldots,\lambda_n)}(1^n)\\
=\frac{\rule[-2.2mm]{0mm}{0mm}\prod_{i=1}^n(\lambda_i-i+n+1)\prod_{1\le i<j\le n}(\lambda_i-\lambda_j-i+j)(\lambda_i+\lambda_j-i-j+2n+2)}
{\prod_{i=1}^n(2i-1)!},\end{multline}
and for a partition or half-partition $(\lambda_1,\ldots,\lambda_n)$,
\begin{multline}\label{numoo}\oo_{(\lambda_1,\ldots,\lambda_n)}(1^n)\\
=\frac{\rule[-2.2mm]{0mm}{0mm}\prod_{i=1}^n(2\lambda_i-2i+2n+1)
\prod_{1\le i<j\le n}(\lambda_i-\lambda_j-i+j)(\lambda_i+\lambda_j-i-j+2n+1)}{\prod_{i=1}^n(2i-1)!}\end{multline}
and
\begin{equation}\label{numoe}\oe_{(\lambda_1,\ldots,\lambda_n)}(1^n)=
\frac{\rule[-2.2mm]{0mm}{0mm}2^n\prod_{1\le i<j\le n}(\lambda_i-\lambda_j-i+j)(\lambda_i+\lambda_j-i-j+2n)}
{(1+\delta_{\lambda_n,0})\prod_{i=1}^{n-1}(2i)!}.\end{equation}

By combining~\eqref{oeshifted} and~\eqref{numoe}, it follows that,
for a partition or half-partition $(\lambda_1,\ldots,\lambda_n)$,
\begin{equation}\label{numoominus}\oo_{(\lambda_1,\ldots,\lambda_n)}((-1)^n)=
\frac{\rule[-2.2mm]{0mm}{0mm}(-1)^{\sum_{i=1}^n\lambda_i}\prod_{1\le i<j\le n}(\lambda_i-\lambda_j-i+j)(\lambda_i+\lambda_j-i-j+2n+1)}
{\prod_{i=1}^{n-1}(2i)!}.\end{equation}

For further information regarding~\eqref{numSSYT}--\eqref{numoe}, and derivations using 
Weyl's denominator formula, see, for example, Fulton and Harris~\cite[Ch.~24]{FulHar91}.

We now proceed to the consideration of plane partitions and related combinatorial objects (such as rhombus tilings), 
and alternating sign matrices and related combinatorial objects (such as alternating sign triangles). 
For definitions and information regarding plane partitions, alternating sign matrices, and symmetry classes of these
objects, see, for example, Bressoud~\cite{Bre99}, 
Krattenthaler~\cite{Kra16}, and Behrend, Fischer and Konvalinka~\cite[Secs. 1.2 \& 1.3]{BehFisKon17}.
Of particular relevance here are the facts that the sizes of certain sets of these objects
are given by product formulae, and that in all of the cases which will be considered,
these product formulae can be related to cases of the product formulae~\eqref{numSSYT}--\eqref{numoominus} 
in which the partition is of rectangular or double-staircase shape.  The rectangular shapes are associated with
exact equalities between numbers of plane partitions and specializations of~\eqref{numSSYT}--\eqref{numoominus} (as will be seen
in the list in Subsection~\ref{rect}),
while the double-staircase shapes are associated with equalities, up to certain
simple prefactors (which are typically powers of~3), between numbers of plane partitions or alternating sign matrices
and specializations of~\eqref{numSSYT}--\eqref{numoominus} (as will be seen in the list in Subsection~\ref{doubstair}). 
In the rectangular cases, certain combinatorial connections can be found between the plane partitions 
and tableaux enumerated by the same formula. The simplest example is that of equality between the number of 
plane partitions in an $a\times b\times c$ box and the number of semistandard Young tableaux of 
shape $(a^b,0^c)$ with entries from $\{1,\ldots,b+c\}$, for which there is an almost-trivial bijection between the respective sets of objects.
On the other hand, no such combinatorial connections are currently known for any of the double-staircase cases. 
Furthermore, in some of these cases, there are several
different types of plane partitions, alternating sign matrices or related objects which are enumerated by the same product formula,
but without a bijective explanation for the equalities currently known.
We note that the explicit appearance of characters of classical groups in the enumeration of alternating sign matrices was first obtained, algebraically,
by Okada~\cite{Oka06}.

In the next two subsections, the partitions in the general theorems of Section~\ref{results} will first be specialized 
to have rectangular shape (Subsection~\ref{rect}) or double-staircase shape (Subsection~\ref{doubstair}), which will lead to 
factorization identities of Okada~\cite{Oka98}, Ciucu and Krattenthaler~\cite{CiuKra09},
Behrend, Fischer and Konvalinka~\cite{BehFisKon17}, and
Ayyer, Behrend and Fischer~\cite{AyyBehFis16}.  Then, 
all of the variables in these results will be set to~1, which, using the relations discussed above, 
will lead to factorization identities for numbers of plane partitions, alternating sign matrices or related objects.
Almost all of these identities have appeared previously in the literature, at least in some form, and although they can easily be verified directly 
using the product formulae 
for the relevant objects, it seems interesting that they can also be regarded as specializations of multivariate factorization
results.  

\subsection{Partitions of rectangular shape}\label{rect}
In the results in this subsection,~$m$ is a nonnegative integer.

The following result, which is a corollary of Theorem~\ref{thm1}, was 
noted without proof by Okada~\cite[Lem.~5.2, 1st Eq.]{Oka98}, and proved by 
Ciucu and Krattenthaler~\cite[Thms.~3.1 \& 3.2]{CiuKra09}.
\begin{corollary}
We have
\begin{align}\label{rect1}
s_{((2m)^n,0^n)}(x_1,\ldots,x_n,\x_1,\ldots,\x_n)&=(-1)^{mn}\,\oo_{(m^n)}(x_1,\ldots,x_n)\;\oo_{(m^n)}(-x_1,\ldots,-x_n)
\intertext{and}
\label{rect2}s_{((2m+1)^n,0^n)}(x_1,\ldots,x_n,\x_1,\ldots,\x_n)&=\sp_{(m^n)}(x_1,\ldots,x_n)\;\oe_{((m+1)^n)}(x_1,\ldots,x_n).\end{align}
\end{corollary}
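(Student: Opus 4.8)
The plan is to derive the Corollary directly from Theorem~\ref{thm1} by specializing to the rectangular partition $(\lambda_1,\ldots,\lambda_n)=(m^n)$, or rather to the two reformulations \eqref{fact11} and \eqref{fact12}, which already carry out the necessary applications of \eqref{ooshifted} and \eqref{oeshifted}. The key observation is that the constant partition $(m^n)$ and its reverse $(m^n)$ coincide, so that the partition indexing the Schur polynomial on the LHS of \eqref{fact1} collapses to a rectangle.

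First I would treat \eqref{rect1}. Setting $\lambda_i=m$ for all $i$ in the integer case of Theorem~\ref{thm1}, with $k=\lambda_1=m$, the index of the Schur polynomial becomes
\begin{equation*}(k+\lambda_1,\ldots,k+\lambda_n,k-\lambda_n,\ldots,k-\lambda_1)=((2m)^n,0^n),\end{equation*}
which is exactly the argument on the LHS of \eqref{rect1}. Since $(m^n)$ is a genuine partition (all entries equal, hence weakly decreasing), reformulation \eqref{fact11} applies verbatim and yields
\begin{equation*}s_{((2m)^n,0^n)}(x_1,\ldots,x_n,\x_1,\ldots,\x_n)=(-1)^{\sum_i\lambda_i}\,\oo_{(m^n)}(x_1,\ldots,x_n)\;\oo_{(m^n)}(-x_1,\ldots,-x_n),\end{equation*}
and $\sum_{i=1}^n\lambda_i=mn$ gives the stated sign $(-1)^{mn}$. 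Nothing beyond this substitution is required.

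Next I would treat \eqref{rect2} by applying the half-partition case of Theorem~\ref{thm1}, as packaged in \eqref{fact12}. Taking $\lambda_i=m$ for all $i$ there and $k=\lambda_1=m$, the index of the Schur polynomial in \eqref{fact12} becomes
\begin{equation*}(\lambda_1+\lambda_1+1,\ldots,\lambda_1+\lambda_n+1,\lambda_1-\lambda_n,\ldots,\lambda_1-\lambda_1)=((2m+1)^n,0^n),\end{equation*}
matching the LHS of \eqref{rect2}, while the RHS of \eqref{fact12} specializes to $\sp_{(m^n)}(x_1,\ldots,x_n)\,\oe_{((m+1)^n)}(x_1,\ldots,x_n)$, which is precisely the RHS of \eqref{rect2}. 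Thus both identities follow by direct substitution of the constant partition into the two reformulations already established in Subsection~\ref{altstate}.

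I do not anticipate a genuine obstacle here, since the Corollary is a pure specialization; the only point requiring a word of care is bookkeeping of the partition indices to confirm that $(m^n)$ forced on the LHS really produces the rectangles $((2m)^n,0^n)$ and $((2m+1)^n,0^n)$, and that the reverse tuple $(\lambda_n,\ldots,\lambda_1)$ equals $(m^n)$ so that the middle entries $k\pm\lambda_i$ all collapse correctly. If one instead wished to argue from the primary statement \eqref{fact1} rather than from \eqref{fact11}--\eqref{fact12}, the mildly delicate step would be re-deriving \eqref{oeshifted} for \eqref{rect1} and \eqref{ooshifted} for \eqref{rect2} in the constant case; but since those reformulations are already available in the excerpt, the cleanest route is simply to invoke them.
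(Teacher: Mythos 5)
Your proposal is correct and coincides with the paper's own proof, which likewise obtains the corollary by substituting $(\lambda_1,\ldots,\lambda_n)=(m^n)$ into the reformulations \eqref{fact11} and \eqref{fact12}. The index bookkeeping you carry out is exactly the (routine) verification needed, and no further argument is required.
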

\begin{proof}
Taking $(\lambda_1,\ldots,\lambda_n)=(m^n)$ in~\eqref{fact11} and~\eqref{fact12} gives~\eqref{rect1}
and~\eqref{rect2}, respectively.
\end{proof}

The following result, which is a corollary of Theorem~\ref{thm2}, was obtained by Ciucu and Krattenthaler~\cite[Thms.~3.3 \& 3.4]{CiuKra09}.
\begin{corollary}
We have
\begin{align}\notag&
s_{((2m)^n,0^n)}(x_1,\ldots,x_n,\x_1,\ldots,\x_n)+s_{((2m)^{n-1},0^{n+1})}(x_1,\ldots,x_n,\x_1,\ldots,\x_n)\\*[-1mm]
\label{addrect1}&\hspace{62mm}=(1+\delta_{m,0})\,\sp_{(m^n)}(x_1,\ldots,x_n)\;\oe_{(m^n)}(x_1,\ldots,x_n)
\intertext{and}
\notag&s_{((2m+1)^n,0^n)}(x_1,\ldots,x_n,\x_1,\ldots,\x_n)+s_{((2m+1)^{n-1},0^{n+1})}(x_1,\ldots,x_n,\x_1,\ldots,\x_n)\\*[-1mm]
\label{addrect2}&\hspace{62mm}=(-1)^{mn}\,\oo_{((m+1)^n)}(x_1,\ldots,x_n)\;\oo_{(m^n)}(-x_1,\ldots,-x_n).\end{align}
\end{corollary}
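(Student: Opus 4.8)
The plan is to obtain both identities as the rectangular specialization of Theorem~\ref{thm2}, using its alternative statements \eqref{fact21} and \eqref{fact22}, in which every character is indexed by an honest partition. Just as the preceding corollary was deduced from \eqref{fact11} and \eqref{fact12} by setting $(\lambda_1,\ldots,\lambda_n)=(m^n)$, here I would set the indexing partition $(\lambda_0,\ldots,\lambda_n)=(m^{n+1})$, i.e.\ all parts equal to~$m$, in \eqref{fact21} and \eqref{fact22}, and then simply read off the resulting shapes.

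First I would treat \eqref{addrect1}. Substituting $(\lambda_0,\ldots,\lambda_n)=(m^{n+1})$ into \eqref{fact21}, every sum $\lambda_0+\lambda_j$ equals $2m$ and every difference $\lambda_0-\lambda_j$ equals $0$; the first Schur polynomial then has $n$ parts equal to $2m$ followed by $n$ parts equal to $0$, giving $s_{((2m)^n,0^n)}$, whereas the second has only $n-1$ parts equal to $2m$ followed by $n+1$ parts equal to $0$, giving $s_{((2m)^{n-1},0^{n+1})}$. On the right-hand side, $(\lambda_0,\ldots,\lambda_{n-1})=(m^n)$ and $(\lambda_1,\ldots,\lambda_n)=(m^n)$, so the characters become $\sp_{(m^n)}$ and $\oe_{(m^n)}$, while the prefactor $(1+\delta_{\lambda_n,0})$ becomes $(1+\delta_{m,0})$. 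This is exactly \eqref{addrect1}.

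The identity \eqref{addrect2} follows from the same substitution applied to \eqref{fact22}: the extra $+1$ in the positive block turns $2m$ into $2m+1$, so the two Schur polynomials become $s_{((2m+1)^n,0^n)}$ and $s_{((2m+1)^{n-1},0^{n+1})}$; the characters $\oo_{(\lambda_0+1,\ldots,\lambda_{n-1}+1)}$ and $\oo_{(\lambda_1,\ldots,\lambda_n)}$ become $\oo_{((m+1)^n)}$ and $\oo_{(m^n)}$ (the latter evaluated at $-x_1,\ldots,-x_n$); and the sign $(-1)^{\sum_{i=1}^n\lambda_i}$ becomes $(-1)^{mn}$, yielding \eqref{addrect2}.

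There is no substantive obstacle here: once \eqref{fact21} and \eqref{fact22} are in hand, the corollary is a direct verification. The only point requiring a little care is the bookkeeping of part multiplicities, since the asymmetry between the two summands in each of \eqref{fact21} and \eqref{fact22}, namely a sum block of length $n$ paired with a difference block of length $n$ versus a sum block of length $n-1$ paired with a difference block of length $n+1$, is precisely what produces the two distinct rectangular shapes on the left-hand sides.
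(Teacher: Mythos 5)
Your proposal is correct and coincides with the paper's own proof, which likewise obtains \eqref{addrect1} and \eqref{addrect2} by setting $(\lambda_0,\ldots,\lambda_n)=(m^{n+1})$ in \eqref{fact21} and \eqref{fact22}, respectively. The additional bookkeeping you supply (the block lengths $n$ versus $n-1$ and $n$ versus $n+1$, and the specializations of the prefactors) is exactly the verification the paper leaves implicit.
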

\begin{proof}
Taking $(\lambda_0,\ldots,\lambda_n)=(m^{n+1})$ in~\eqref{fact21} and~\eqref{fact22} gives~\eqref{addrect1}
and~\eqref{addrect2}, respectively.
\end{proof}

The following result, which is a corollary of Theorem~\ref{thm3} and Proposition~\ref{prop1},
was noted without proof by Okada~\cite[Lem.~5.2, 2nd Eq.]{Oka98}.
\begin{corollary}
We have
\begin{align}\label{rect3}
s_{((2m)^n,0^{n+1})}(x_1,\ldots,x_n,\x_1,\ldots,\x_n,1)&=\sp_{(m^n)}(x_1,\ldots,x_n)\;\oo_{(m^n)}(x_1,\ldots,x_n)
\intertext{and}
\label{rect4}s_{((2m+1)^n,0^{n+1})}(x_1,\ldots,x_n,\x_1,\ldots,\x_n,1)&=\sp_{(m^n)}(x_1,\ldots,x_n)\;\oo_{((m+1)^n)}(x_1,\ldots,x_n).\end{align}
\end{corollary}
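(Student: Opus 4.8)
The plan is to obtain both~\eqref{rect3} and~\eqref{rect4} by specializing Theorem~\ref{thm3} to a constant (half-)partition, and then rewriting each orthogonal character on the resulting right-hand side by means of~\eqref{ooshifted} and Proposition~\ref{prop1}. In each case the shape is rectangular, so Proposition~\ref{prop1} is applied with $b=0$; the sum $\sum_{j=-b/2}^{b/2}x_i^j$ then collapses to~$1$ and introduces no spurious factor. In both derivations the factor $\sp_{(m^n)}(x_1,\ldots,x_n)$ will arise from an odd orthogonal character indexed by the half-partition $((m+\half)^n)$ via~\eqref{ooshifted}, and the remaining odd orthogonal character will come from applying Proposition~\ref{prop1} to an even orthogonal character of rectangular shape.

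For~\eqref{rect3} I would start from the $-$ case of~\eqref{fact3'}, taking the partition $(\lambda_0,\ldots,\lambda_n)=(m^{n+1})$ and the integer $k=m$. A direct check shows that the Schur polynomial on the left is then indexed by $((2m)^n,0^{n+1})$ and that the left-hand prefactor is $2\prod_{i=1}^n(x_i^\half+\x_i^\half)$. On the right, the odd orthogonal character is $\oo_{((m+\half)^n)}$, which by~\eqref{ooshifted} equals $\prod_{i=1}^n(x_i^\half+\x_i^\half)\,\sp_{(m^n)}(x_1,\ldots,x_n)$; and the even orthogonal character $\oe_{(m^{n+1})}(x_1,\ldots,x_n,1)$, together with the accompanying factor $(1+\delta_{\lambda_n,0})$, is exactly the left-hand side of~\eqref{oeodd1} with $a=m$ and $b=0$, hence equals $2\,\oo_{(m^n)}(x_1,\ldots,x_n)$. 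Substituting these and cancelling the common factor $2\prod_{i=1}^n(x_i^\half+\x_i^\half)$ from both sides yields~\eqref{rect3}.

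For~\eqref{rect4} I would instead use~\eqref{fact3}, but now in its half-partition case, taking $(\lambda_0,\ldots,\lambda_n)=((m+\half)^{n+1})$ and the half-integer $k=m+\half$. The Schur polynomial is then indexed by $((2m+1)^n,0^{n+1})$, and the same prefactor $2\prod_{i=1}^n(x_i^\half+\x_i^\half)$ appears. The odd orthogonal character $\oo_{((m+\half)^n)}$ again reduces to $\prod_{i=1}^n(x_i^\half+\x_i^\half)\,\sp_{(m^n)}$ via~\eqref{ooshifted}. The essential point is that the half-shift present in the even orthogonal character of~\eqref{fact3} turns $\oe_{(\lambda_0+\half,\ldots,\lambda_n+\half)}$ into the integer-indexed character $\oe_{((m+1)^{n+1})}(x_1,\ldots,x_n,1)$, so that Proposition~\ref{prop1} (with $a=m+1$, $b=0$, and $\delta_{\lambda_n,0}=0$) applies and gives $2\,\oo_{((m+1)^n)}(x_1,\ldots,x_n)$. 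Cancelling the common prefactor then produces~\eqref{rect4}.

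The computation itself is routine; the only real care needed is in the bookkeeping. For each identity one must choose the correct equation of Theorem~\ref{thm3} (\eqref{fact3} versus~\eqref{fact3'}) and the correct parity of the indexing (half-)partition so that Proposition~\ref{prop1}, which concerns an \emph{unshifted} even orthogonal character, is the applicable one; this forces the integer-partition/\eqref{fact3'} route for~\eqref{rect3} and the half-partition/\eqref{fact3} route for~\eqref{rect4}, and is the step I expect to be the main obstacle to locating a clean proof. One should also note that the factor $(1+\delta_{\lambda_n,0})$ in~\eqref{fact3'} is identical to the prefactor in Proposition~\ref{prop1}, so invoking the proposition absorbs it exactly; this is the only place where the degenerate case $m=0$ of~\eqref{rect3} requires a moment's attention. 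The $\pm$ ambiguity on the left of~\eqref{fact3'} is harmless, the two signs giving equal Schur polynomials by~\eqref{Schurrecip}, and I would simply take the $-$ sign so as to land directly on $((2m)^n,0^{n+1})$.
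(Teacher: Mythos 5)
Your proposal is correct and matches the paper's proof exactly: the paper also obtains~\eqref{rect3} from the $-$ case of~\eqref{fact3'} with $(\lambda_0,\ldots,\lambda_n)=(m^{n+1})$, $k=m$, and~\eqref{rect4} from the half-partition case of~\eqref{fact3} with $(\lambda_0,\ldots,\lambda_n)=\bigl((m+\tfrac{1}{2})^{n+1}\bigr)$, $k=m+\tfrac{1}{2}$, applying~\eqref{ooshifted} to the odd orthogonal character and~\eqref{oeodd1} with $b=0$ (and $a=m$, respectively $a=m+1$) to the even orthogonal character. Your bookkeeping of the prefactors, including the absorption of $(1+\delta_{\lambda_n,0})$ by Proposition~\ref{prop1}, is accurate.
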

\begin{proof}
To obtain~\eqref{rect3}, take $(\lambda_0,\ldots,\lambda_n)=(m^{n+1})$ and $k=m$ in the $-$ case of~\eqref{fact3'},
and apply~\eqref{ooshifted} to the odd orthogonal character.
Then take $a=m$ and $b=0$ in~\eqref{lam4}, and apply~\eqref{oeodd1} to the even orthogonal character.  
To obtain~\eqref{rect4}, take $(\lambda_0,\ldots,\lambda_n)=\bigl((m+\frac{1}{2})^{n+1}\bigr)$ and $k=m+\frac{1}{2}$ in~\eqref{fact3}, 
and apply~\eqref{ooshifted} to the odd orthogonal character.
Then take $a=m+1$ and $b=0$ in~\eqref{lam4}, and apply~\eqref{oeodd1} to the even orthogonal character.  
\end{proof}

We now set $x_i=1$, for each~$i$, in~\eqref{rect1}--\eqref{rect4}.  The terms in the resulting equations 
can then be identified as follows, for appropriate $a$, $b$ and $c$.
\begin{list}{$\bullet$}{\setlength{\topsep}{0.8mm}\setlength{\labelwidth}{2mm}\setlength{\leftmargin}{6mm}}
\item We have 
\begin{equation*}\quad s_{(a^b,0^c)}(1^{b+c})=\prod_{i=1}^a\prod_{j=1}^b\prod_{k=1}^c\frac{i+j+k-1}{i+j+k-2},\end{equation*}
which is the number of plane partitions in an $a\times b\times c$ box (MacMahon~\cite[Sec.~429]{Mac16}). 
We denote this number as $\mathrm{PP}(a,b,c)$.  
\item We have 
\begin{equation*}\quad(-1)^{ab}\oo_{(a^b)}((-1)^b)=\sp_{(a^{b-1})}(1^{b-1})=\prod_{1\le i<j\le b}\frac{i+j+2a-1}{i+j-1},\end{equation*}
which is the number of transpose complementary plane partitions in a $(2a)\times b\times b$ box (Proctor~\cite{Pro86,Pro88}). 
We denote this number as $\mathrm{TCPP}(2a,b,b)$.
\item We have 
\begin{equation*}\quad\oo_{(a^b)}(1^b)=\prod_{1\le i\le j\le b}\frac{i+j+2a-1}{i+j-1},\end{equation*}
which is the number of symmetric plane partitions in a $(2a)\times b\times b$ box (Andrews~\cite{And78}).
We denote this number as $\mathrm{SPP}(2a,b,b)$.  Note that $\mathrm{SPP}(2a,b,b)$ is also the number of certain weighted rhombus tilings of a half-hexagon,
as shown by Ciucu~\cite[Sec.~6]{Ciu97}. (However, no bijective explanation for this is currently known.)
More specifically, using the notation of Ciucu~\cite[Eq.~(6.1)]{Ciu97}, $\mathrm{SPP}(2a,b,b)=2^bM(H(b,b,2a)^-)$.
\item We have 
\begin{equation*}\quad\oe_{(a^b)}(1^b)=2\prod_{1\le i<j\le b}\frac{i+j+2a-2}{i+j-2},\end{equation*}
which is the number of certain restricted symmetric plane partitions in a $(2a)\times b\times b$ box
(Ciucu and Krattenthaler~\cite[pp.~54--56]{CiuKra09}). We denote this number as $\mathrm{SPP}^\ast(2a,b,b)$.
Note that $\mathrm{SPP}^\ast(2a,b,b)=2\,\mathrm{SPP}(2a,b-1,b-1)$.
\end{list}

By using a simple bijection between plane partitions and rhombus tilings, each of the previous cases
can be stated in terms of the latter.  For example, $\mathrm{PP}(a,b,c)$ is the number of rhombus
tilings of a hexagon with side lengths $a$, $b$, $c$, $a$, $b$, $c$, $\mathrm{TCPP}(2a,b,b)$ is 
the number of rhombus tilings of a hexagon with side lengths $2a$, $b$, $b$, $2a$, $b$, $b$ which 
are symmetric with respect to the line bisecting the two sides of length $2a$, and
$\mathrm{SPP}(2a,b,b)$ is the number of rhombus tilings of a hexagon with side lengths $2a$, $b$, $b$, $2a$, $b$, $b$ which 
are symmetric with respect to the line joining the two points where sides of length $b$ meet.

Some additional remarks regarding the identifications in the previous list, and the identifications which will appear
in the analogous list in Subsection~\ref{doubstair}, are as follows.
The initial equalities (up to prefactors) between characters with all variables set to~$\pm1$ and 
explicit product formulae can be obtained straightforwardly using~\eqref{numSSYT}--\eqref{numoominus}.  
The further equalities between explicit product formulae and numbers of plane partitions, alternating sign matrices or related objects 
are fundamental results in the enumeration of these objects and their symmetry classes.
The references given in parentheses are for the first proofs of these results.   
However, in most cases, several later proofs are also known, 
and in some cases, the result was conjectured in the literature long before being proved.  The references for the later proofs and conjectures are
omitted here.  Also note that the explicit product formulae often appear in the literature in several different forms, but that
the equality between the forms can be obtained straightforwardly.

By applying identifications in the previous list, the results~\eqref{rect1}--\eqref{rect4} with all variables set to~$1$ give the following
factorization identities for numbers of plane partitions:
\begin{align}
\label{PP1}\mathrm{PP}(2m,n,n)&=\mathrm{SPP}(2m,n,n)\,\mathrm{TCPP}(2m,n,n),\\
\mathrm{PP}(2m+1,n,n)&=\mathrm{SPP}^\ast(2m+2,n,n)\,\mathrm{TCPP}(2m,n+1,n+1),\\
\notag\mathrm{PP}(2m+1,n,n)+\mathrm{PP}(2m+1,n-1,n+1)\hspace*{-38mm}&\\
&=\mathrm{SPP}(2m+2,n,n)\,\mathrm{TCPP}(2m,n,n),\\
\notag\mathrm{PP}(2m,n,n)+\mathrm{PP}(2m,n-1,n+1)\hspace*{-38mm}&\\
\label{PP4}&=\mathrm{SPP}^\ast(2m,n,n)\,\mathrm{TCPP}(2m,n+1,n+1),\\
\mathrm{PP}(2m,n,n+1)&=\mathrm{SPP}(2m,n,n)\,\mathrm{TCPP}(2m,n+1,n+1),\\
\mathrm{PP}(2m+1,n,n+1)&=\mathrm{SPP}(2m+2,n,n)\,\mathrm{TCPP}(2m,n+1,n+1).
\end{align}

Note that~\eqref{PP1}--\eqref{PP4} were previously obtained from~\eqref{rect1}--\eqref{addrect2} by 
Ciucu and Krattenthaler~\cite[Eqs.~(3.26), (3.29), (3.36), (3.37)]{CiuKra09}.  
For a recent generalization of~\eqref{PP1}, see Ciucu and Krattenthaler~\cite[Thm.~2.1]{CiuKra17}.

\subsection{Partitions of double-staircase shape}\label{doubstair}
The following result, which is a corollary of Theorem~\ref{thm1}, was noted without proof by Ayyer, Behrend and Fischer~\cite[Remark~6.4]{AyyBehFis16}.
\begin{corollary}
We have
\begin{align}\notag&s_{(2n,2n-1,2n-1,2n-2,2n-2,\ldots,2,2,1,1,0)}(x_1,\ldots,x_{2n},\x_1,\ldots,\x_{2n})\\*[-0.5mm]
\notag&\hspace{12mm}=(-1)^n\,\oo_{(n,n-1,n-1,n-2,n-2,\ldots,2,2,1,1,0)}(x_1,\ldots,x_{2n})\\*[-0.5mm]
\label{QASTfact1}&\hspace{77mm}\times\oo_{(n,n-1,n-1,n-2,n-2,\ldots,2,2,1,1,0)}(-x_1,\ldots,-x_{2n})
\intertext{and}
\notag&s_{(2n+1,2n,2n,2n-1,2n-1,\ldots,2,2,1,1,0)}(x_1,\ldots,x_{2n+1},\x_1,\ldots,\x_{2n+1})\\*[-1mm]
\label{QASTfact2}&\hspace{12mm}=\sp_{(n,n-1,n-1,n-2,n-2,\ldots,1,1,0,0)}(x_1,\ldots,x_{2n+1})\;
\oe_{(n+1,n,n,n-1,n-1,\ldots,2,2,1,1)}(x_1,\ldots,x_{2n+1}).\end{align}
\end{corollary}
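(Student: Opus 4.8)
The plan is to obtain both identities as direct specializations of the alternative forms~\eqref{fact11} and~\eqref{fact12} of Theorem~\ref{thm1}, choosing in each the double-staircase partition that produces the required Schur index on the left-hand side. No new machinery is needed beyond these two equations; the work is entirely index bookkeeping.

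For~\eqref{QASTfact1} I would apply~\eqref{fact11} with the ``$n$'' there replaced by $2n$, taking the double staircase $(\lambda_1,\ldots,\lambda_{2n})=(n,n-1,n-1,n-2,n-2,\ldots,2,2,1,1,0)$, so that $\lambda_1=n$, $\lambda_{2n}=0$, and $\lambda_{2k}=\lambda_{2k+1}=n-k$ for $1\le k\le n-1$. The first step is to verify that the Schur index $(\lambda_1+\lambda_1,\ldots,\lambda_1+\lambda_{2n},\lambda_1-\lambda_{2n},\ldots,\lambda_1-\lambda_1)$ collapses to exactly $(2n,2n-1,2n-1,\ldots,2,2,1,1,0)$: since $\lambda_1=n$, the first block runs from $2n$ down through the doubled values to $\lambda_1+\lambda_{2n}=n$, while the reversed-difference block runs from $\lambda_1-\lambda_{2n}=n$ back down through the doubled values to $0$, so the value $n$ occurs once in each block and the two blocks splice into the stated double staircase. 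The second step is the sign: here $\sum_{i=1}^{2n}\lambda_i=n+2\sum_{k=1}^{n-1}(n-k)=n+n(n-1)=n^2$, and since $n^2\equiv n\pmod2$ we get $(-1)^{\sum_i\lambda_i}=(-1)^n$, matching the prefactor in~\eqref{QASTfact1}. The two odd orthogonal characters on the right are then read off as $\oo_{(\lambda_1,\ldots,\lambda_{2n})}$ evaluated at $x_1,\ldots,x_{2n}$ and at $-x_1,\ldots,-x_{2n}$.

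For~\eqref{QASTfact2} I would instead apply~\eqref{fact12} with ``$n$'' replaced by $2n+1$, taking $(\lambda_1,\ldots,\lambda_{2n+1})=(n,n-1,n-1,\ldots,1,1,0,0)$, i.e.\ the same pattern but with a second trailing zero. One checks that the shifted Schur index $(\lambda_1+\lambda_1+1,\ldots,\lambda_1+\lambda_{2n+1}+1,\lambda_1-\lambda_{2n+1},\ldots,\lambda_1-\lambda_1)$ equals $(2n+1,2n,2n,\ldots,2,2,1,1,0)$, again by splitting into the two blocks and using the extra $+1$ in the first block to raise each entry $\lambda_1+\lambda_j$ by one; the first block ends in the doubled value $n+1$ and the second begins with the doubled value $n$, so the two endpoints $2n+1$ and $0$ are single while all intermediate values are doubled. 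The right-hand side is then immediate: the symplectic factor is $\sp_{(\lambda_1,\ldots,\lambda_{2n+1})}=\sp_{(n,n-1,n-1,\ldots,1,1,0,0)}$, and the even orthogonal factor is $\oe_{(\lambda_1+1,\ldots,\lambda_{2n+1}+1)}=\oe_{(n+1,n,n,\ldots,2,2,1,1)}$, exactly as claimed. I do not anticipate a genuine obstacle here, since both identities are pure specializations of Theorem~\ref{thm1}; the only points requiring care are confirming that the two blocks of the Schur index splice correctly into the double staircase, handling the single versus doubled endpoints, and the short parity computation $\sum_i\lambda_i\equiv n\pmod2$ that fixes the sign in~\eqref{QASTfact1}.
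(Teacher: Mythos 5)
Your proposal is correct and follows exactly the paper's own proof: specialize~\eqref{fact11} with $n\mapsto 2n$ and $(\lambda_1,\ldots,\lambda_{2n})=(n,n-1,n-1,\ldots,1,1,0)$ for the first identity, and~\eqref{fact12} with $n\mapsto 2n+1$ and $(\lambda_1,\ldots,\lambda_{2n+1})=(n,n-1,n-1,\ldots,1,1,0,0)$ for the second. The index-splicing and the parity computation $\sum_i\lambda_i=n^2\equiv n\pmod 2$ are exactly the checks required, and you carry them out correctly.
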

\begin{proof}Replacing $n$ by $2n$ in~\eqref{fact11}, and then taking $(\lambda_1,\ldots,\lambda_{2n})=
(n,n-1,n-1,n-2,n-2,\ldots,2,2,1,1,0)$ in that equation gives~\eqref{QASTfact1}.
Replacing $n$ by $2n+1$ in~\eqref{fact12}, and then taking $(\lambda_1,\ldots,\lambda_{2n+1})=
(n,n-1,n-1,n-2,n-2,\ldots,1,1,0,0)$ in that equation gives~\eqref{QASTfact2}.\end{proof}

The following result, which is a corollary of Theorem~\ref{thm1}, was also noted 
without proof by Ayyer, Behrend and Fischer~\cite[Remark~5.4]{AyyBehFis16}.
\begin{corollary}
We have
\begin{align}\notag&s_{(2n,2n,2n-1,2n-1,\ldots,1,1,0,0)}(x_1,\ldots,x_{2n+1},\x_1,\ldots,\x_{2n+1})\\*[-1mm]
\label{ASTfact1}&\hspace{17mm}=\oo_{(n,n,n-1,n-1,\ldots,2,2,1,1,0)}(x_1,\ldots,x_{2n+1})\;\oo_{(n,n,n-1,n-1,\ldots,2,2,1,1,0)}(-x_1,\ldots,-x_{2n+1})
\intertext{and}
\notag&s_{(2n-1,2n-1,2n-2,2n-2,\ldots,1,1,0,0)}(x_1,\ldots,x_{2n},\x_1,\ldots,\x_{2n})\\*[-1mm]
\label{ASTfact2}&\hspace{17mm}=\sp_{(n-1,n-1,n-2,n-2,\ldots,1,1,0,0)}(x_1,\ldots,x_{2n})\;\oe_{(n,n,n-1,n-1,\ldots,2,2,1,1)}(x_1,\ldots,x_{2n}).\end{align}
\end{corollary}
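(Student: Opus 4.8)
The plan is to derive both identities as direct specializations of the alternative statements \eqref{fact11} and \eqref{fact12} of Theorem~\ref{thm1}, in the same spirit as the proofs of the preceding double-staircase corollaries.

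For \eqref{ASTfact1}, I would first replace $n$ by $2n+1$ in \eqref{fact11} and then take $(\lambda_1,\ldots,\lambda_{2n+1})=(n,n,n-1,n-1,\ldots,2,2,1,1,0)$. With this choice $\lambda_1=n$, so the right-hand side of \eqref{fact11} becomes precisely the product of odd orthogonal characters appearing in \eqref{ASTfact1}, evaluated at $(x_1,\ldots,x_{2n+1})$ and $(-x_1,\ldots,-x_{2n+1})$. The only real task is then to check that the Schur index $(\lambda_1+\lambda_1,\ldots,\lambda_1+\lambda_{2n+1},\lambda_1-\lambda_{2n+1},\ldots,\lambda_1-\lambda_1)$ collapses to the claimed double-staircase $(2n,2n,2n-1,2n-1,\ldots,1,1,0,0)$: the upper block $n+\lambda_j$ runs through $2n,2n,\ldots,n+1,n+1,n$, the lower block $n-\lambda_j$ (with indices reversed) runs through $n,n-1,n-1,\ldots,1,1,0,0$, and their concatenation is exactly the asserted partition. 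Finally, since $\sum_{i=1}^{2n+1}\lambda_i=n(n+1)$ is even, the sign $(-1)^{\sum_i\lambda_i}$ in \eqref{fact11} equals $1$, matching the absence of a prefactor in \eqref{ASTfact1}.

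For \eqref{ASTfact2}, I would instead replace $n$ by $2n$ in \eqref{fact12} and take $(\lambda_1,\ldots,\lambda_{2n})=(n-1,n-1,n-2,n-2,\ldots,1,1,0,0)$. Then the symplectic factor $\sp_{(\lambda_1,\ldots,\lambda_{2n})}$ is already the one in \eqref{ASTfact2}, and the shifted index $(\lambda_1+1,\ldots,\lambda_{2n}+1)=(n,n,n-1,n-1,\ldots,2,2,1,1)$ gives exactly the even orthogonal character on the right of \eqref{ASTfact2}. As before, it remains to verify that the Schur index $(\lambda_1+\lambda_1+1,\ldots,\lambda_1+\lambda_{2n}+1,\lambda_1-\lambda_{2n},\ldots,\lambda_1-\lambda_1)$ collapses to $(2n-1,2n-1,2n-2,2n-2,\ldots,1,1,0,0)$; here the upper block $n+\lambda_j$ runs through $2n-1,2n-1,\ldots,n,n$ and the lower block $(n-1)-\lambda_j$ through $n-1,n-1,\ldots,1,1,0,0$, whose concatenation is the stated partition.

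The computations are entirely routine, and there is no substantive analytic obstacle, since all the real work has already been carried out in establishing \eqref{fact11} and \eqref{fact12}. The only point requiring care is the index bookkeeping in the two collapses, in particular keeping track of the reversal of the lower block and confirming in \eqref{ASTfact1} that the single trailing $n$ of the upper block combines with the single leading $n$ of the lower block so that the entry $n$ occurs exactly twice, whereas in \eqref{ASTfact2} the two blocks abut cleanly between the pairs $n,n$ and $n-1,n-1$, so that every value appears with the correct multiplicity.
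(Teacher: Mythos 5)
Your proposal is correct and follows exactly the paper's own proof: the paper likewise obtains \eqref{ASTfact1} by replacing $n$ with $2n+1$ in \eqref{fact11} and taking $(\lambda_1,\ldots,\lambda_{2n+1})=(n,n,n-1,n-1,\ldots,2,2,1,1,0)$, and \eqref{ASTfact2} by replacing $n$ with $2n$ in \eqref{fact12} and taking $(\lambda_1,\ldots,\lambda_{2n})=(n-1,n-1,n-2,n-2,\ldots,1,1,0,0)$. Your index bookkeeping and the parity check $\sum_i\lambda_i=n(n+1)$ are all accurate.
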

\begin{proof}
Replacing $n$ by $2n+1$ in~\eqref{fact11}, and then taking $(\lambda_1,\ldots,\lambda_{2n+1})=
(n,n,n-1,n-1,\ldots,2,2,1,1,0)$ in that equation gives~\eqref{ASTfact1}.
Replacing $n$ by $2n$ in~\eqref{fact12}, and then taking $(\lambda_1,\ldots,\lambda_{2n})=
(n-1,n-1,n-2,n-2,\ldots,1,1,0,0)$ in that equation gives~\eqref{ASTfact2}.
\end{proof}

The following result, which is a corollary of Theorem~\ref{thm3} and Proposition~\ref{prop2},
was noted without proof by Behrend, Fischer and Konvalinka~\cite[Eq.~(63)]{BehFisKon17}.
A form of this result in which the odd orthogonal characters on the RHSs of~\eqref{DASASMfact1} and~\eqref{DASASMfact2} are replaced by
certain symplectic characters was obtained previously by Zinn-Justin~\cite[Eq.~(54)]{Zin13}.
\begin{corollary}
We have
\begin{align}\notag&s_{(2n,2n-1,2n-1,2n-2,2n-2,\ldots,1,1,0,0)}(x_1,\ldots,x_{2n},\x_1,\ldots,\x_{2n},1)=\prod_{i=1}^{2n}(\x_i+1+x_i)\\*
\label{DASASMfact1}&\hspace{15mm}\times
\oo_{(n,n-1,n-1,n-2,n-2,\ldots,2,2,1,1,0)}(x_1,\ldots,x_{2n})\;\sp_{(n-1,n-1,n-2,n-2,\ldots,1,1,0,0)}(x_1,\ldots,x_{2n})
\intertext{and}
\notag&s_{(2n+1,2n,2n,2n-1,2n-1,\ldots,1,1,0,0)}(x_1,\ldots,x_{2n+1},\x_1,\ldots,\x_{2n+1},1)=\prod_{i=1}^{2n+1}(\x_i+1+x_i)\\*
\label{DASASMfact2}&\hspace{15mm}\times
\oo_{(n,n,n-1,n-1,\ldots,2,2,1,1,0)}(x_1,\ldots,x_{2n+1})\;\sp_{(n,n-1,n-1,n-2,n-2,\ldots,1,1,0,0)}(x_1,\ldots,x_{2n+1}).\end{align}
\end{corollary}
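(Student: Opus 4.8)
The plan is to obtain each of~\eqref{DASASMfact1} and~\eqref{DASASMfact2} by specializing~\eqref{fact3} of Theorem~\ref{thm3} to a suitable double-staircase tuple $(\lambda_0,\ldots,\lambda_N)$, where $N$ is the number of $x$-variables, and then rewriting the two orthogonal characters on its RHS. In each case, one of the odd orthogonal characters that appears is indexed by a half-partition and is converted to a symplectic character by~\eqref{ooshifted}, while the even orthogonal character from~\eqref{fact3} is rewritten using Proposition~\ref{prop2}. The single factor $\prod_i\bigl(x_i^\half+\x_i^\half\bigr)$ introduced by~\eqref{ooshifted} cancels the like factor on the LHS of~\eqref{fact3}, and the factor of~$2$ introduced by Proposition~\ref{prop2} cancels the~$2$ on that LHS, leaving the stated products.

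For~\eqref{DASASMfact1} I would apply~\eqref{fact3} with $N=2n$, $k=n$, and the partition $(\lambda_0,\ldots,\lambda_{2n})=(n,n,n-1,n-1,\ldots,1,1,0)$. A direct check shows that $(k+\lambda_1,\ldots,k+\lambda_{2n},k-\lambda_{2n},\ldots,k-\lambda_0)$ is exactly the double staircase $(2n,2n-1,2n-1,\ldots,1,1,0,0)$, and that the resulting $\oo_{(\lambda_1,\ldots,\lambda_{2n})}=\oo_{(n,n-1,n-1,\ldots,1,1,0)}$ is already the partition-indexed odd orthogonal character on the RHS. The tuple $(\lambda_0,\ldots,\lambda_{2n})$ has the form~\eqref{lam5} with $a=0$ and $b=1$, so Proposition~\ref{prop2} rewrites the even orthogonal character as $2\prod_i(\x_i+1+x_i)$ times $\oo_{(\lambda_0-\half,\ldots,\lambda_{2n-1}-\half)}=\oo_{(n-\half,n-\half,\ldots,\half,\half)}$; this last character is half-partition-indexed, and~\eqref{ooshifted} turns it into $\prod_i\bigl(x_i^\half+\x_i^\half\bigr)$ times $\sp_{(n-1,n-1,\ldots,1,1,0,0)}$, the symplectic character in~\eqref{DASASMfact1}.

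For~\eqref{DASASMfact2} I would instead take $N=2n+1$, the half-integer $k=n+\half$, and the half-partition $(\lambda_0,\ldots,\lambda_{2n+1})=(n+\half,n+\half,n-\half,n-\half,\ldots,\half,\half)$. Now $(k+\lambda_1,\ldots,k+\lambda_{2n+1},k-\lambda_{2n+1},\ldots,k-\lambda_0)$ equals $(2n+1,2n,2n,\ldots,1,1,0,0)$, and the roles are reversed: the character $\oo_{(\lambda_1,\ldots,\lambda_{2n+1})}=\oo_{(n+\half,n-\half,\ldots,\half,\half)}$ from~\eqref{fact3} is half-partition-indexed and becomes, via~\eqref{ooshifted}, $\prod_i\bigl(x_i^\half+\x_i^\half\bigr)$ times $\sp_{(n,n-1,n-1,\ldots,1,1,0,0)}$, while Proposition~\ref{prop2} (again with $b=1$, now $a=\half$) rewrites the even orthogonal character as $2\prod_i(\x_i+1+x_i)$ times the partition-indexed $\oo_{(\lambda_0-\half,\ldots,\lambda_{2n}-\half)}=\oo_{(n,n,n-1,n-1,\ldots,1,1,0)}$, the odd orthogonal character in~\eqref{DASASMfact2}. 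In both identities, substituting these rewritings into~\eqref{fact3} and dividing by $2\prod_i\bigl(x_i^\half+\x_i^\half\bigr)$ yields the claim.

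The only real work lies in the index bookkeeping: one must verify that the double-staircase Schur index corresponds to the stated $(\lambda_0,\ldots,\lambda_N)$ and $k$, that this tuple has precisely the shape~\eqref{lam5} with $b=1$ (which forces the sum $\sum_{j=-(b+1)/2}^{(b+1)/2}x_i^j$ in~\eqref{oeodd2} to collapse to $\x_i+1+x_i$), and that the half-integer shifts produced by~\eqref{ooshifted} and Proposition~\ref{prop2} land on exactly the partitions and half-partitions on the two right-hand sides. The main subtlety is the parity dependence in~\eqref{lam5}: for~\eqref{DASASMfact1} one has $N=2n$ and integer $a=0$, whereas for~\eqref{DASASMfact2} one has $N=2n+1$ and half-integer $a=\half$, so the leading term $\lfloor\frac{N+1}{2}\rfloor b+(-1)^N a$ and the placement of the single unpaired staircase entry must be tracked carefully. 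Once these matches are confirmed, no further computation is needed.
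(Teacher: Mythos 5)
Your proposal is correct and follows essentially the same route as the paper: both identities come from \eqref{fact3} applied to the double-staircase specialization \eqref{lam5} with $b=1$, with the even orthogonal character rewritten via Proposition~\ref{prop2} and the remaining half-partition-indexed odd orthogonal character converted to a symplectic character via \eqref{ooshifted}. The only divergence is in \eqref{DASASMfact2}, where the paper takes the partition $(n+1,n,n,\ldots,1,1,0)$ with integer $k=n+1$ (so $a=0$ in \eqref{lam5}) and must finish with an application of \eqref{Schurrecip} to reindex the Schur polynomial, whereas your half-partition choice $(n+\half,n+\half,\ldots,\half,\half)$ with $k=n+\half$ (so $a=\half$, still satisfying $b\ge 2a$) lands directly on the stated index --- a harmless, slightly cleaner variant of the same argument.
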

\begin{proof}
To obtain~\eqref{DASASMfact1}, replace $n$ by~$2n$ in~\eqref{fact3},~\eqref{lam5} and~\eqref{oeodd2}, and 
let $(\lambda_0,\ldots,\lambda_{2n})$ be given by~\eqref{lam5} with $a=0$ and $b=1$.
Then use~\eqref{fact3} and~\eqref{oeodd2}, with $k=n$, and apply~\eqref{ooshifted} to the odd orthogonal character in~\eqref{oeodd2}.

To obtain~\eqref{DASASMfact2}, replace $n$ by $2n+1$ in~\eqref{fact3},~\eqref{lam5} and~\eqref{oeodd2}, and 
let $(\lambda_0,\ldots,\lambda_{2n+1})$ be given by~\eqref{lam5} with $a=0$ and $b=1$.
Then use~\eqref{fact3} and~\eqref{oeodd2}, with $k=n+1$, and apply~\eqref{ooshifted} to the odd orthogonal character in~\eqref{oeodd2}.
Finally, use~\eqref{Schurrecip} to obtain
$s_{(2n+1,2n+1,2n,2n,\ldots,2,2,1,1,0)}(x_1,\ldots,x_{2n+1},\allowbreak\x_1,\ldots,\x_{2n+1},1)=
s_{(2n+1,2n,2n,2n-1,2n-1,\ldots,1,1,0,0)}(x_1,\ldots,x_{2n+1},\x_1,\ldots,\x_{2n+1},1).$\end{proof}

Note that several of the characters in~\eqref{QASTfact1}--\eqref{DASASMfact2} are related to 
partition functions of certain cases of the six-vertex model.  For further details, see 
Ayyer, Behrend and Fischer~\cite[Thms.~6.3 \&~5.3]{AyyBehFis16},
Behrend, Fischer and Konvalinka~\cite[Cor.~4]{BehFisKon17},
Okada~\cite[Thms.~2.4 \&~2.5]{Oka06}, 
Razumov and Stroganov~\cite[Thms.~2,~3 \&~5]{RazStr04b},~\cite[Eqs.~(28) \&~(31)]{RazStr06a} and
Stroganov~\cite[Eqs.~(6) \&~(11)]{Str04},~\cite[Eq.~(17)]{Str06}.

We now set $x_i=1$, for each~$i$, in~\eqref{ASTfact1}--\eqref{DASASMfact2}.  The terms in the resulting equations 
can then be identified as follows, for appropriate $m$ (where the remarks 
made regarding the list in Subsection~\ref{rect} again apply).
\begin{list}{$\bullet$}{\setlength{\topsep}{0.8mm}\setlength{\labelwidth}{2mm}\setlength{\leftmargin}{6mm}}
\item We have 
\begin{equation*}\quad3^{-m(m-1)/2}\,s_{(m,m-1,m-1,m-2,m-2,\ldots,2,2,1,1,0)}(1^{2m})=\prod_{i=0}^{m-1}\frac{(3i+2)(3i)!}{(m+i)!},\end{equation*}
which is the number of cyclically symmetric plane partitions in an $m\times m\times m$ box (Andrews~\cite{And79,And80}),
and the number of quasi alternating sign triangles with $m$ rows (Ayyer, Behrend and Fischer~\cite{AyyBehFis16}). 
We denote this number as $\mathrm{CSPP}(m)$.
\item We have 
\begin{equation*}\quad3^{-m(m-1)/2}\,s_{(m-1,m-1,m-2,m-2,\ldots,1,1,0,0)}(1^{2m})=\prod_{i=0}^{m-1}\frac{(3i+1)!}{(m+i)!},\end{equation*} 
which is the number of each of the following: $m\times m$ alternating sign matrices (Zeilberger~\cite{Zei96a}, Kuperberg~\cite{Kup96}), order~$m$ descending 
plane partitions (Andrews~\cite{And79,And80}), 
totally symmetric self-complementary plane partitions in a $(2m)\times(2m)\times(2m)$ box (Andrews~\cite{And94}),
and alternating sign triangles with $m$ rows (Ayyer, Behrend and Fischer~\cite{AyyBehFis16}). We denote this number as $\mathrm{ASM}(m)$.
\item We have
\begin{equation*}\quad3^{-m(m-1)/2}\,s_{(m,m-1,m-1,m-2,m-2,\ldots,2,2,1,1,0,0)}(1^{2m+1})=\prod_{i=0}^m\frac{(3i)!}{(m+i)!},\end{equation*} 
which is the number of $(2m+1)\times(2m+1)$ diagonally and antidiagonally symmetric
alternating sign matrices (Behrend, Fischer and Konvalinka~\cite{BehFisKon17}). We denote this number as $\mathrm{DASASM}(2m+1)$.
\item We have
\begin{multline*}\quad\;3^{-m(m-1)}\oo_{(m,m-1,m-1,m-2,m-2,\ldots,2,2,1,1,0)}(1^{2m})\\*
={\textstyle\frac{1}{2}}\,3^{-m(m+1)}\oe_{(m+1,m,m,m-1,m-1,\ldots,2,2,1,1)}(1^{2m+1})
=\prod_{i=1}^m\frac{(6i-1)\,(6i-3)!}{(2i-1)\,(2m+2i-1)!},\end{multline*}
which is the number of totally symmetric plane partitions in a $(2m)\times(2m)\times(2m)$ box (Stembridge~\cite{Ste95}). 
We denote this number as $\mathrm{TSPP}(2m,2m,2m)$. Note that $\mathrm{TSPP}(2m,2m,2m)$ is also the number of certain weighted rhombus tilings of a
pentagonal region, as shown by Ciucu and Krattenthaler~\cite[Sec.~4]{CiuKra00}.  
More specifically, using notation of Ciucu and Krattenthaler~\cite[Eqs.~(4.2)--(4.5) \& Fig.~4.2]{CiuKra00},
$\mathrm{TSPP}(2m,2m,2m)=2^{2m}L(R_1^-)|_{n=m,x=0}$, or using notation of 
Lai and Rohatgi~\cite[Eq.~(4.4) \& Fig.~4.1(d)]{LaiRoh18}, $\mathrm{TSPP}(2m,2m,2m)=2^{2m}\,\mathrm{M}(\rule{0mm}{3mm}^\ast_\ast\mathcal{G}_{m,1})$.
\item We have
\begin{multline*}\quad\;3^{-(m-1)^2}\oo_{(m-1,m-1,m-2,m-2,\ldots,2,2,1,1,0)}(1^{2m-1})\\
={\textstyle\frac{1}{2}}\,3^{-m^2}\oe_{(m,m,m-1,m-1,\ldots,2,2,1,1)}(1^{2m})
=\prod_{i=0}^{m-1}\frac{(6i+1)!}{(2m+2i-1)!},\end{multline*} 
which is the number of certain weighted rhombus tilings 
of a pentagonal region (Ciucu and Krattenthaler~\cite[Sec.~4]{CiuKra00}).  We denote this number as $\mathrm{R}(2m)$.
More specifically, using notation of Ciucu and Krattenthaler~\cite[Eqs.~(4.2)--(4.5) \& Fig.~4.2]{CiuKra00},
$\mathrm{R}(2m)=2^{2m-2}L(R_1^-)|_{n=m-1,x=1}$, or using notation of 
Lai and Rohatgi~\cite[Eq.~(4.4) \& Fig.~4.1(d)]{LaiRoh18}, $\mathrm{R}(2m)=2^{2m-2}\,\mathrm{M}(\rule{0mm}{3mm}^\ast_\ast\mathcal{G}_{m-1,2})$.
\item We have 
\begin{multline*}\quad\;(-1)^m\,3^{-m^2}\oo_{(m,m-1,m-1,m-2,m-2,\ldots,2,2,1,1,0)}((-1)^{2m})\\
=3^{-(m-1)^2}\sp_{(m-1,m-2,m-2,m-3,m-3,\ldots,1,1,0,0)}(1^{2m-1})
=\prod_{i=0}^{m-1}\frac{(2i+1)\,(6i+2)!}{(6i+1)\,(2m+2i)!},\end{multline*}
which is the number of cyclically symmetric transpose complementary plane partitions in a $(2m)\times(2m)\times(2m)$ box
(Mills, Robbins and Rumsey~\cite{MilRobRum87}), and the number of
vertically symmetric quasi alternating sign triangles with $2m$ 
rows (Behrend and Fischer~\cite{BehFis17c}). We denote this number as $\mathrm{CSTCPP}(2m,2m,2m)$.
\item We have 
\begin{multline*}\quad\;3^{-m(m+1)}\oo_{(m,m,m-1,m-1,\ldots,2,2,1,1,0)}((-1)^{2m+1})\\
=3^{-m(m-1)}\sp_{(m-1,m-1,m-2,m-2,\ldots,2,2,1,1,0,0)}(1^{2m})
=\prod_{i=1}^{m}\frac{(6i-2)!}{(2m+2i)!},\end{multline*} 
which is the number of each of the following: $(2m+1)\times(2m+1)$ vertically symmetric alternating sign matrices (Kuperberg~\cite{Kup02}),
order $2m+1$ descending plane partitions invariant under a certain
involution~$\tau$ of Mills, Robbins and Rumsey~\cite[pp.~351--353]{MilRobRum83}
(Mills, Robbins and Rumsey~\cite{MilRobRum87}),
totally symmetric self-complementary plane partitions in a $(4m+2)\times(4m+2)\times(4m+2)$ box invariant under a certain
involution~$\gamma$ of Mills, Robbins and Rumsey~\cite[p.~286]{MilRobRum86} (Ishikawa~\cite{Ish06b}),
$(2m)\times(2m)$ off-diagonally symmetric alternating sign matrices (Kuperberg~\cite{Kup02}),
and vertically symmetric alternating sign triangles with $2m+1$ rows (Behrend and Fischer~\cite{BehFis17c}).
We denote this number as $\mathrm{VSASM}(2m+1)$.
\end{list}

As indicated in Subsection~\ref{generalconsid}, no 
bijective explanations are currently known for any of the 
equalities between numbers of objects in the previous list.

On the other hand, by using a simple bijection between plane partitions and rhombus tilings, several cases 
can be stated in terms of the latter, as follows. 
Cyclically symmetric plane partitions in an $m\times m\times m$ box are in bijection
with rhombus tilings of a regular hexagon with side length~$m$
which are invariant under rotation by $120^\circ$.
Order~$m$ descending plane partitions are in bijection with
rhombus tilings of a hexagon with alternating sides of lengths $m-1$ and $m+1$ 
and a central equilateral triangular hole with side length~2, which are invariant under rotation by $120^\circ$.
See Krattenthaler~\cite{Kra06}.
Totally symmetric self-complementary plane partitions in a $(2m)\times(2m)\times(2m)$ box
are in bijection with rhombus tilings of a regular hexagon with side length~$2m$ which
are invariant under all symmetry operations (i.e., rotation by $60^\circ$
and reflection in all six symmetry axes).
Totally symmetric plane partitions in a $(2m)\times(2m)\times(2m)$ box
are in bijection with rhombus tilings of a regular hexagon with side length~$2m$
which are invariant under rotation by $120^\circ$ and reflection in any symmetry axis
passing through two vertices.
Cyclically symmetric transpose complementary plane partitions in a $(2m)\times(2m)\times(2m)$ box
are in bijection with rhombus tilings of a regular hexagon with side length~$2m$
which are invariant under rotation by $120^\circ$ and reflection in any symmetry axis
bisecting two sides.
Order $2m+1$ descending plane partitions invariant under the involution $\tau$ defined
by Mills, Robbins and Rumsey~\cite[pp.~351--353]{MilRobRum83} are in bijection with
rhombus tilings of a hexagon with alternating sides of lengths~$2m$ and $2m+2$ 
and a central equilateral triangular hole with side length~2, which are invariant under rotation by~$120^\circ$
and reflection in any symmetry axis (i.e., a line bisecting two sides). See Krattenthaler~\cite{Kra06}.

By applying the identifications in the previous list, the results~\eqref{QASTfact1}--\eqref{DASASMfact2} with all variables set to~$1$ give the following
factorization identities for numbers of plane partitions, alternating sign matrices and related objects:
\begin{align}
\label{doubstair1}\mathrm{CSPP}(2n,2n,2n)&=\mathrm{TSPP}(2n,2n,2n)\,\mathrm{CSTCPP}(2n,2n,2n),\\
\label{doubstair2}\mathrm{CSPP}(2n+1,2n+1,2n+1)&=2\,\mathrm{TSPP}(2n,2n,2n)\,\mathrm{CSTCPP}(2n+2,2n+2,2n+2),\\
\label{doubstair3}\mathrm{ASM}(2n+1)&=\mathrm{R}(2n+2)\,\mathrm{VSASM}(2n+1),\\
\label{doubstair4}\mathrm{ASM}(2n)&=2\,\mathrm{R}(2n)\,\mathrm{VSASM}(2n+1),\\
\mathrm{DASASM}(4n+1)&=3^n\,\mathrm{TSPP}(2n,2n,2n)\,\mathrm{VSASM}(2n+1),\\
\mathrm{DASASM}(4n+3)&=3^{n+1}\,\mathrm{R}(2n+2)\,\mathrm{CSTCPP}(2n+2,2n+2,2n+2).
\end{align}

Note that~\eqref{doubstair1} and~\eqref{doubstair2} are closely related to results of
Mills, Robbins and Rumsey~\cite[Thm.~5 with $x=1$, $\mu=0$]{MilRobRum87},~\cite[Thm.~2.1 with $x=1$, $\mu=0$]{Rob00},
Ciucu and Krattenthaler~\cite[Eqs.~(4.3) \& (4.5) with $x=0$]{CiuKra00},
Krattenthaler~\cite[Thm.~36 with $x=1$, $\mu=\nu=0$]{Kra99a},~\cite[Lem.~2 with $x=1$, $\mu=\nu=0$]{Kra99b}, 
and Kuperberg~\cite[Thm.~4, 9th \& 10th Eqs.~with $x=1$]{Kup02}.
Similarly,~\eqref{doubstair3} and~\eqref{doubstair4} are closely related to results of
Mills, Robbins and Rumsey~\cite[Thm.~5 with $x=\mu=1$]{MilRobRum87},~\cite[Thm.~2.1 with $x=\mu=1$]{Rob00},
Ciucu and Krattenthaler~\cite[Eqs.~(4.3) \& (4.5) with $x=1$]{CiuKra00},
Krattenthaler~\cite[Thm.~36 with $x=\mu=1$, $\nu=0$]{Kra99a},~\cite[Lem.~2 with $x=\mu=1$, $\nu=0$]{Kra99b},
and Kuperberg~\cite[Thm.~4, 1st \& 2nd Eqs.~with $x=1$]{Kup02}.

For a recent discussion of certain factorization identities for numbers of plane partitions, including~\eqref{PP1} and~\eqref{doubstair1},
see Ciucu~\cite[Sec.~1]{Ciu16}. For recent generalizations of the rhombus tiling interpretations of~\eqref{doubstair1}--\eqref{doubstair4}, 
see Ciucu~\cite[Thm.~8 \& Cor.~1]{Ciu18}, and Lai and Rohatgi~\cite[Thms.~2.1 \&~2.2]{LaiRoh18}.

\section*{Acknowledgements}
We thank Ilse Fischer, Ole Warnaar and anonymous referees for very helpful comments.
A.~Ayyer was partially supported by the UGC Centre for Advanced Studies and by Department of Science and Technology grants
DST/INT/SWD/VR/P-01/2014 and EMR/2016/006624. R.~E.~Behrend was partially supported by the Austrian Science Foundation FWF, START grant Y463.

\let\oldurl\url
\makeatletter
\renewcommand*\url{%
        \begingroup
        \let\do\@makeother
        \dospecials
        \catcode`{1
        \catcode`}2
        \catcode`\ 10
        \url@aux
}
\newcommand*\url@aux[1]{%
        \setbox0\hbox{\oldurl{#1}}%
        \ifdim\wd0>\linewidth
                \strut
                \\
                \vbox{%
                        \hsize=\linewidth
                        \kern-\lineskip
                        \raggedright
                        \strut\oldurl{#1}%
                }%
        \else
                \hskip0pt plus\linewidth
                \penalty0
                \box0
        \fi
        \endgroup
}
\makeatother
\gdef\MRshorten#1 #2MRend{#1}
\gdef\MRfirsttwo#1#2{\if#1M
MR\else MR#1#2\fi}
\def\MRfix#1{\MRshorten\MRfirsttwo#1 MRend}
\renewcommand\MR[1]{\relax\ifhmode\unskip\spacefactor3000 \space\fi
  \MRhref{\MRfix{#1}}{{\tiny \MRfix{#1}}}}
\renewcommand{\MRhref}[2]{
 \href{http://www.ams.org/mathscinet-getitem?mr=#1}{#2}}

\bibliography{Bibliography}
\bibliographystyle{amsplainhyper}
\end{document}